\newtheorem{theorem}{Theorem}[section]
\newtheorem{proposition}[theorem]{Proposition}
\newtheorem{lemma}[theorem]{Lemma}
\newtheorem{definition}[theorem]{Definition}
\newcommand{\loc}{\operatorname{loc}}
\newcommand{\dist}{d}
\newcommand{\dx}{\textnormal{d}x}
\newcommand{\ds}{\textnormal{d}\sigma}
\newcommand{\esup}{\operatorname{ess} \operatorname{sup}}
\newcommand{\einf}{\operatorname{ess} \operatorname{inf}}
\newcommand{\spt}{\operatorname{supp}}
\title{On removable singularities for solutions of Neumann problem for elliptic equations involving variable exponent}
\author{Juan Alcon Apaza}
\address{Universidade Federal Fluminense, Instituto de Matemática, Campus do Gragoatá, Rua Prof. Marcos Waldemar de Freitas, s/n, bloco H, Niterói, RJ 24210-201, Brazil }
\email{jpablo@id.uff.br}
\begin{document}

\maketitle

\begin{abstract}
We study the removability of a singular set in the boundary of Neumann problem for elliptic equations with variable exponent. We consider the case where the singular set is compact, and give sufficient conditions for removability of this singularity for  equations in the variable exponent Sobolev space $W^{1,p(\cdot)}(\Omega)$.
\end{abstract}

\let\thefootnote\relax\footnote{2020 \textit{Mathematics Subject Classification}. 35A21, 35D30, 35J57, 35J60.}
\let\thefootnote\relax\footnote{\textit{Key words and phrases}. variable exponent, singular set, removable singularity, Neumann problem.}

\markright{ON REMOVABLE OF SINGULARITIES FOR SOLUTIONS OF NEUMANN PROBLEM}

\section{Introduction}

This paper is devoted to the study of  conditions guaranteeing  the removability of singular set for solutions of nonlinear elliptic equations with Neumann boundary conditions of the form:
\begin{equation} \label{1}
\left\{ \begin{aligned}
-\operatorname{div} A \left(x,u,\nabla u\right) + a \left(x,u\right)  + g(x,u) &= 0 & & \text { in } \Omega ,\\
A \left(x,u,\nabla u\right) \cdot \nu + b(x,u) + h(x,u)  &= 0 & & \text { on } \partial \Omega.
\end{aligned}
\right.
\end{equation}

\noindent Throughout the whole article  $\Omega$ is a bounded open set in $\mathbb{R}^{n}$, $n\geq 2$, with Lipschitz boundary $\partial \Omega$, and   $\Gamma \subset \partial \Omega$ is  a compact set. We always equip $\partial \Omega$ with the $(n-1)$-dimensional Hausdorff measure.

\noindent We assume that  $A:\Omega \times \mathbb{R} \times \mathbb{R}^n \rightarrow \mathbb{R}^n$, $a :\Omega \times \mathbb{R} \rightarrow \mathbb{R}$, $g:\bar{\Omega} \times \mathbb{R} \rightarrow \mathbb{R}$, $b, h :\partial \Omega \times \mathbb{R} \rightarrow \mathbb{R}$ are measurable functions, $g$ and  $h$ are locally bounded. Furthermore, there exists $\mu >0$ such that the following conditions are satisfied  almost everywhere:
\begin{gather}
 \left\langle A (x, u, \eta) , \eta \right\rangle \geq \mu\vert    \eta\vert    ^{p_1 (x)}, \label{4}\\
\left\vert    A(x, u, \eta)\right\vert     \leq \mu^{-1}\left(\vert    \eta\vert    ^{p_1 (x)-1}+\vert    u\vert    ^{p_1 (x)-1}+1 \right), \label{5}\\ 
\left\vert    a(x, u)\right\vert     \leq \mu^{-1}\left(\vert    u\vert    ^{p_1 (x)-1}+1 \right), \label{6}\\
g(x, u) \operatorname{sign} u \geq \mu \vert    u\vert    ^{p_2 (x)}-\mu ^{-1}, \label{7}\\
\left\vert    b(x, u)\right\vert     \leq \mu^{-1}\left(\vert    u\vert    ^{q_1 (x) -1}+1\right),\label{8}\\
h(x, u) \operatorname{sign} u \geq \mu \vert    u\vert    ^{q_2 (x)}-\mu^{-1}, \label{9}\\
A (x, u, -\eta)= -A (x, u, \eta), \label{23}
\end{gather}
where $p_1, p_2 : \Omega \rightarrow \mathbb{R}$, $q_1, q_2 : \partial \Omega \rightarrow \mathbb{R}$ are measurable functions satisfying 
\begin{gather}
p^-_1 , p_1 ^+ \in (1,n), \ q_1 ^- >1 , \quad \min\left\{p^- _2 , q^- _2\right\} - \max \left\{ p^+ _1 , q^+ _1\right\} +1 >0.\label{24}
\end{gather}
Also, for some $d\geq 0$,
\begin{gather}
\esup _{\Omega}\frac{p_1 p_2}{p_2 - p_1 +1} < n-d, \label{30}
\end{gather}
where $p^-_i = \einf _{\Omega} p_i $,   $p^+_i = \esup _{\Omega} p_i $,   $q^-_i = \einf _{\partial \Omega  } q_i $  and    $q^+_i  =   \esup _{\partial \Omega } q_i $; $i=1,2$.

Concerning the singular set $\Gamma \subset \partial \Omega$, we suppose that $\vert \Gamma \vert= 0$, and  there is a small enough $r_0 \in (0,1)$  so that  $\mathcal{U}=\{x\in \Omega \:\vert    \: \operatorname{dist} (x , \Gamma) < 2 r_0 \}$ is an open set with Lipschitz boundary $\partial \mathcal{U}$, $\{x\in \Omega \:\vert    \: \operatorname{dist}(x , \Gamma) = 2 r_0 \} \neq \emptyset$, and for a suitable positive constant $C>0$:  
\begin{gather} \label{51}
\left\vert  \left\{ \left\vert\operatorname{dist} \left( \cdot , \Gamma \right) - \ell \right\vert    < r \right\} \right\vert \leq C r^{n-d},  \left\vert  \left\{ \operatorname{dist} \left( \cdot , \Gamma \right)= r \right\} \right\vert \leq C r^{n-d-1}  \text { and }   \left\vert \partial \Omega \cap \left\{ \left\vert\operatorname{dist} \left( \cdot , \Gamma \right) - \ell \right\vert    < r \right\} \right\vert \leq C r^{n-d-1},
\end{gather}
where $0<r\leq\ell<r_0$.

The main result of this paper is the following theorem.

\begin{theorem} \label{34}
Suppose that the conditions \eqref{4} - \eqref{51}  are satisfied.  Let $u \in W^{1, p_1(\cdot)} _{\loc} \left(\bar{\Omega}\backslash \Gamma \right)$ $\cap L^\infty _{\loc}\left(\bar{\Omega}\backslash \Gamma\right)$ be a solution of equation \eqref{1} in $\bar{\Omega} \backslash \Gamma$. Then, the singularity of $u$ at  $\Gamma$ is removable.
\end{theorem}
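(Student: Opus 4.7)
The strategy is a standard capacity-type argument adapted to the variable exponent setting: construct a family of Lipschitz cut-off functions $\varphi_r$ supported away from the singular set $\Gamma$, test the weak formulation with $u\varphi_r^k$, obtain uniform energy bounds by combining the coercivity conditions \eqref{4}, \eqref{7}, \eqref{9} with the measure estimates \eqref{51}, and finally pass to $r\to 0$ to extend $u$ across $\Gamma$ as a global solution. More concretely, I would define $\varphi_r$ so that $\varphi_r\equiv 0$ on $\{\operatorname{dist}(\cdot,\Gamma)<r\}$, $\varphi_r\equiv 1$ on $\{\operatorname{dist}(\cdot,\Gamma)>2r\}$, and $|\nabla\varphi_r|\leq C/r$; since $u\in W^{1,p_1(\cdot)}_{\loc}\cap L^\infty_{\loc}$ on $\bar\Omega\setminus\Gamma$, the function $u\varphi_r^k$ is an admissible test function in \eqref{1} restricted to $\bar\Omega\setminus\Gamma$.

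Plugging $u\varphi_r^k$ into the weak formulation produces the inequality
\begin{equation*}
\mu\int_{\Omega}|\nabla u|^{p_1(x)}\varphi_r^k\,\dx+\mu\int_{\Omega}|u|^{p_2(x)}\varphi_r^k\,\dx+\mu\int_{\partial\Omega}|u|^{q_2(x)}\varphi_r^k\,\ds\leq I_{\varphi}+\text{l.o.t.},
\end{equation*}
where the ``boundary-in-$r$'' term is $I_\varphi=k\bigl|\int_\Omega\langle A(x,u,\nabla u),\nabla\varphi_r\rangle u\varphi_r^{k-1}\,\dx\bigr|$. Using \eqref{5} and Young's inequality with variable exponents, $I_\varphi$ splits as $\varepsilon\int|\nabla u|^{p_1(x)}\varphi_r^k+C_\varepsilon\int |u|^{p_1(x)}|\nabla\varphi_r|^{p_1(x)}\varphi_r^{k-p_1(x)}+(\text{lower order})$. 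Absorbing the first term into the left-hand side, the crucial task becomes estimating $\int_{\{r<\operatorname{dist}(\cdot,\Gamma)<2r\}}|u|^{p_1(x)}r^{-p_1(x)}\,\dx$. Applying Hölder's inequality against the already-controlled term $\int|u|^{p_2(x)}$, and using the first inequality in \eqref{51} to bound the measure of the annular strip by $Cr^{n-d}$, yields an upper bound of order $r^{-p_1^++(n-d)(p_2-p_1)/p_2}$; the exponent condition \eqref{30}, written as $\sup p_1p_2/(p_2-p_1+1)<n-d$, is precisely what forces this power to be nonnegative (and in fact to tend to zero after the $\varepsilon$ absorption and the auxiliary contribution of the lower order terms controlled via \eqref{6}). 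The boundary piece on $\partial\Omega\cap\{r<\operatorname{dist}<2r\}$ is treated analogously using \eqref{8}, \eqref{9} together with the third estimate in \eqref{51}.

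Passing $r\to 0$ via monotone convergence (using \eqref{23} to keep the sign structure) gives $\int_\Omega|\nabla u|^{p_1(x)}\,\dx+\int_\Omega|u|^{p_2(x)}\,\dx+\int_{\partial\Omega}|u|^{q_2(x)}\,\ds<\infty$, so by the growth assumption \eqref{24} and the embedding of variable exponent spaces, $u\in W^{1,p_1(\cdot)}(\Omega)$. To upgrade this to a global solution, I test \eqref{1} against $\phi(1-\varphi_r^k)$ for an arbitrary $\phi\in W^{1,p_1(\cdot)}(\Omega)\cap L^\infty(\Omega)$: the integrals away from $\Gamma$ converge to the global weak formulation, while the cut-off remainders vanish in $r$ by exactly the same energy estimates together with Hölder and \eqref{51}. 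The resulting identity is the weak formulation of \eqref{1} on all of $\bar\Omega$, proving removability.

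The main technical obstacle is the handling of Young's inequality and Hölder's inequality with variable exponents when the measure of the level sets of $\operatorname{dist}(\cdot,\Gamma)$ is only known up to the non-integer power $n-d$: one must ensure that the essential suprema and infima of $p_1,p_2,q_1,q_2$ line up so that the scaling $r^{-p_1(x)}$ is dominated by the measure factor $r^{n-d}$ uniformly in $x$, which is exactly the content of \eqref{24} and \eqref{30}. A secondary subtlety is the simultaneous treatment of the interior and boundary nonlinearities, since both $g$ and $h$ contribute useful coercive terms but also must be dominated when estimating the lower order contributions in $I_\varphi$; this is where the third inequality in \eqref{51} with exponent $n-d-1$ enters and must be matched carefully with \eqref{9}.
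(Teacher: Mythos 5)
Your proposal misses the central technical difficulty of the paper and, as written, the exponent arithmetic does not close. The condition you invoke to make the remainder vanish is not actually implied by \eqref{30}.

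Concretely: you propose to estimate $\int_{\{r<\operatorname{dist}(\cdot,\Gamma)<2r\}}|u|^{p_1(x)}r^{-p_1(x)}\,\dx$ by H\"older against $\int|u|^{p_2(x)}$, producing an $r$-power of the form $(n-d)\tfrac{p_2-p_1}{p_2}-p_1$. This exponent is nonnegative only when $n-d\geq\tfrac{p_1p_2}{p_2-p_1}$, whereas \eqref{30} only gives $n-d>\esup\tfrac{p_1p_2}{p_2-p_1+1}$. Since $\tfrac{p_1p_2}{p_2-p_1+1}<\tfrac{p_1p_2}{p_2-p_1}$, the hypothesis is strictly weaker than what your computation needs (try $p_1=2$, $p_2=4$, $n-d=3$: \eqref{30} holds but your exponent is negative). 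The ``$+1$'' in the denominator of \eqref{30} is a genuine logarithmic gain, and the paper extracts it only by first establishing the pointwise polynomial decay $|u(x)|\leq C\operatorname{dist}(x,\Gamma)^{-\tau}$ near $\Gamma$ (Proposition~\ref{19}/Theorem~\ref{21}, via a De Giorgi iteration in annuli $V_{\ell,r}$) and then testing with the logarithmic function $\varphi=\bigl(\ln\max\{u/\Lambda(\delta),1\}\bigr)\psi_r^\gamma$ in Lemma~\ref{29}. That logarithmic test is what converts $\tfrac{p_1p_2}{p_2-p_1}$ into $\tfrac{p_1p_2}{p_2-p_1+1}$; a straight $u\varphi_r^k$ test cannot.

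A second, related gap: you never prove $u\in L^\infty(\Omega)$, which is part of the definition of removability and is precisely the conclusion of Lemma~\ref{29}. In the paper's proof, once $u\in L^\infty(\Omega)$ is known, the annular gradient estimate becomes elementary: $\int_{\{r\leq\operatorname{dist}\leq 2r\}}|\nabla u|^{p_1}\,\dx\leq C r^{n-d-p_1^+}$, and one only needs $n-d>p_1^+$, which \eqref{30} does imply. Your approach tries to run the capacity estimate and the $L^\infty$ bound simultaneously, but the absorption of the H\"older factor $\bigl(\int|u|^{p_2}\varphi_r^{k}\bigr)^{p_1/p_2}$ into the coercive left-hand side term is also not free (the weights $\varphi_r^{k-p_1}$ and $\varphi_r^{k}$ do not match on the annulus), and more importantly the resulting power of $r$ fails as above. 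To repair the argument you would need to import both the decay estimate of Theorem~\ref{21} and the logarithmic test function of Lemma~\ref{29} before the final energy estimate and the $(1-\xi_r)\varphi$ test in Step~2, which is precisely the structure of the paper's proof.
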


We follow the same lines as in  \cite{fu} and \cite{mam}; which handle the case when the singular set is an interior point of $\Omega$, and the solution of an elliptic equation $u$ has no boundary conditions.

\section{Preliminaries}

We first  recall some facts on spaces $L^{p(\cdot)} (\Omega)$ and $W^{1,p(\cdot)} (\Omega)$. Denote by $\mathbf{P} \left( \Omega \right)$  the set of all Lebesgue measurable functions $p:\Omega \rightarrow [1, \infty]$. For the details see \cite{fan, kova, sam, dien}. Let $p\in \mathbf{P} \left( \Omega \right)$, we define the functional 
$$
\rho _{p}(u) = \int _{\Omega \backslash \Omega _{\infty}} \vert    u\vert    ^{p} \dx + \esup _{\Omega _{\infty}} \vert    u\vert    ,
$$
where $\Omega _{\infty} = \{x\in \Omega \:\vert    \: p(x) =\infty\}$.

The variable exponent Lebesgue space $L^{p(\cdot)}(\Omega)$ is the class of all functions $u$ such that $\rho_{p}(t u)<\infty$, for some $t>0$. $L^{p(\cdot)}(\Omega)$ is a Banach space equipped with the norm
$$
\Vert      u\Vert      _{L^{p(\cdot)} (\Omega)}=\inf \left\{\lambda>0\:\vert    \: \rho_{p}\left(\frac{u}{\lambda}\right) \leq 1\right\};
$$
see \cite[Theorem 2.5]{kova}.

\begin{proposition}\label{31} (see \cite[Theorem 2.1]{kova}.)
Let $p \in \mathbf{P}(\Omega)$. If $u \in L^{p(\cdot)}(\Omega)$ and $v \in L^{p^{\prime}(\cdot)}(\Omega)$, then
$$
\int_{\Omega}\vert    u v\vert     \dx \leq 2\Vert      u\Vert      _{L^{p(\cdot)} (\Omega)}\Vert      v\Vert      _{L^{p^\prime(\cdot)} (\Omega)} ,
$$
where 
$$
 p^{\prime}(x)= \left\{\begin{aligned}& \infty & & \text { if }  p(x)=1,\\ 
&1 & & \text { if } p(x) = \infty, \\ 
&\frac{p(x)}{p(x)-1} & & \text { if } p(x) \neq 1 \text { and }  p(x)\neq \infty.
\end{aligned} \right.
$$

\end{proposition}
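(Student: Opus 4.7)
The plan is to establish this variable-exponent Hölder inequality by a normalization-and-splitting argument driven by the modular $\rho_{p}$. Since both sides are positively homogeneous of degree $1$ in $u$ and in $v$, I may assume $\Vert u \Vert_{L^{p(\cdot)}(\Omega)} = \Vert v \Vert_{L^{p^{\prime}(\cdot)}(\Omega)} = 1$. By the very definition of the Luxemburg norm this forces $\rho_{p}(u) \le 1$ and $\rho_{p^{\prime}}(v) \le 1$, so the target inequality reduces to showing $\int_{\Omega} \vert uv \vert \, \dx \le 2$.

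Next, I would partition $\Omega$ into three measurable pieces: $\Omega_{\infty} = \{p = \infty\}$ (on which $p^{\prime} = 1$), $\Omega'_{\infty} = \{p = 1\}$ (on which $p^{\prime} = \infty$), and the remaining set $\Omega^{*}$ on which $1 < p(x), p^{\prime}(x) < \infty$ with $\frac{1}{p(x)} + \frac{1}{p^{\prime}(x)} = 1$. On $\Omega^{*}$ the pointwise Young inequality gives
\[
\vert u(x) v(x) \vert \le \frac{\vert u(x) \vert^{p(x)}}{p(x)} + \frac{\vert v(x) \vert^{p^{\prime}(x)}}{p^{\prime}(x)} \le \vert u(x) \vert^{p(x)} + \vert v(x) \vert^{p^{\prime}(x)}.
\]
On $\Omega_{\infty}$ I use $\vert u(x) v(x) \vert \le (\esup_{\Omega_{\infty}} \vert u \vert) \, \vert v(x) \vert$, and symmetrically on $\Omega'_{\infty}$. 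Integrating and applying the elementary bound $ab \le a + b$ for $a,b \in [0,1]$ to each of the two product terms that arise (both factors of which are summands of $\rho_{p}(u)$ or $\rho_{p^{\prime}}(v)$ and therefore lie in $[0,1]$), the resulting six nonnegative terms regroup precisely into $\rho_{p}(u) + \rho_{p^{\prime}}(v) \le 2$, which is what is needed.

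The step I anticipate will be the main bookkeeping challenge is making the regrouping match up correctly across the three pieces. On $\Omega'_{\infty}$ the exponent $p$ equals $1$, so $\int_{\Omega'_{\infty}} \vert u \vert \, \dx$ belongs to the integral part of $\rho_{p}(u)$ (since $\Omega'_{\infty} \subset \Omega \setminus \Omega_{\infty}$), while $\esup_{\Omega'_{\infty}} \vert v \vert$ sits inside $\rho_{p^{\prime}}(v)$; the symmetric identification must be made on $\Omega_{\infty}$. Once the convention $1/\infty = 0$ is fixed so that $\frac{1}{p} + \frac{1}{p^{\prime}} = 1$ holds pointwise on all of $\Omega$ in the extended sense, and the modular decomposition from the paper's definition of $\rho_{p}$ is applied consistently on each of the three pieces, the algebra closes cleanly and yields the stated constant $2$. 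Rescaling by $\Vert u \Vert_{L^{p(\cdot)}(\Omega)}$ and $\Vert v \Vert_{L^{p^{\prime}(\cdot)}(\Omega)}$ recovers the inequality in full generality.
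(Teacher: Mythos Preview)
The paper does not actually supply a proof of this proposition: it is quoted verbatim as a known result with the citation ``see \cite[Theorem 2.1]{kova}'' and nothing further. There is therefore no ``paper's own proof'' to compare against.

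Your argument is correct and is precisely the classical proof one finds in the cited reference (Kov\'a\v{c}ik--R\'akosn\'\i k) and in standard monographs such as Diening--Harjulehto--H\"ast\"o--R\r{u}\v{z}i\v{c}ka. The normalization to unit Luxemburg norm, the three-set decomposition $\Omega = \Omega^{*} \cup \Omega_{\infty} \cup \Omega'_{\infty}$, pointwise Young on $\Omega^{*}$, and the bound $ab \le a+b$ on $[0,1]$ for the two ess-sup/integral cross terms are exactly the ingredients used there; the six terms you obtain do reassemble into $\rho_{p}(u)+\rho_{p'}(v)\le 2$ as you describe. The only cosmetic omission is the trivial case where one of the norms vanishes, which you should mention in a final write-up before invoking homogeneity.
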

\begin{proposition}\label{32}  (see \cite[Theorem 1.3]{fan}.)
Let $p \in \mathbf{P}(\Omega)$ with $p^{+}<\infty$. For any $u \in L^{p(\cdot)}(\Omega)$, we have
\begin{enumerate}[(1)]
\item if $\Vert      u\Vert      _{L^{p(\cdot)} (\Omega)} \geq 1$, then $\Vert      u\Vert      _{L^{p(\cdot)} (\Omega)}^{p^{-}} \leq \int_{\Omega}\vert    u\vert    ^{p} \dx \leq\Vert      u\Vert      _{L^{p(\cdot)} (\Omega)}^{p^{+}}$,
\item if $\Vert      u\Vert      _{L^{p(\cdot)} (\Omega)}<1$, then $\Vert      u\Vert      _{L^{p(\cdot)} (\Omega)}^{p^{+}} \leq \int_{\Omega}\vert    u\vert    ^{p} \dx \leq\Vert      u\Vert      _{L^{p(\cdot)} (\Omega)}^{p^{-}}$.
\end{enumerate}
\end{proposition}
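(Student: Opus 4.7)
The plan is to show that the locally-defined $u$ extends to a function in $W^{1,p_1(\cdot)}(\Omega)\cap L^\infty(\Omega)$ satisfying the weak formulation of \eqref{1} on all of $\Omega$, following the cut-off strategy of \cite{fu,mam} adapted to handle the Neumann boundary condition. The principal device is a family of Lipschitz cut-offs $\xi_r(x)=\varphi(\operatorname{dist}(x,\Gamma)/r)$, with $\varphi\in C^\infty(\mathbb{R})$ non-decreasing, $\varphi=0$ on $(-\infty,1]$ and $\varphi=1$ on $[2,\infty)$, so that $\xi_r$ vanishes near $\Gamma$, equals $1$ outside $\{\operatorname{dist}(\cdot,\Gamma)<2r\}$, and $|\nabla \xi_r|\le C/r$ is supported on the annular set $A_r:=\{r<\operatorname{dist}(\cdot,\Gamma)<2r\}$. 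I would test the weak form of \eqref{1} against $\varphi_r:=u\,\xi_r^{s}$ with $s\ge p_1^+$; by the assumed local regularity this is an admissible test on $\bar{\Omega}\setminus\Gamma$.

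Expanding $\nabla \varphi_r=\xi_r^{s}\nabla u+s u \xi_r^{s-1}\nabla \xi_r$ and using the ellipticity \eqref{4} together with the sign conditions \eqref{7}, \eqref{9}, the weak formulation gives
\[
\mu\int_\Omega\xi_r^{s}|\nabla u|^{p_1}\,\dx+\mu\int_\Omega\xi_r^{s}|u|^{p_2}\,\dx+\mu\int_{\partial\Omega}\xi_r^{s}|u|^{q_2}\,\ds\le \mathcal{E}_r+C,
\]
where $\mathcal{E}_r$ collects the cross-term $s\int\xi_r^{s-1}u\,A(x,u,\nabla u)\cdot\nabla \xi_r\,\dx$ and the lower-order contributions from \eqref{6} and \eqref{8}. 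Applying the growth condition \eqref{5} and Young's inequality (absorbing $\epsilon |\nabla u|^{p_1}\xi_r^s$ into the left-hand side) reduces $\mathcal{E}_r$ to an estimate essentially of the form
\[
\mathcal{E}_r\le C\,r^{-p_1}\int_{A_r}|u|^{p_1}\,\dx+C\int_{\partial\Omega\cap A_r}(|u|^{q_1}+1)\,\ds+o(1).
\]

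The decisive step is to show $\mathcal{E}_r\to 0$ as $r\to 0$. Applying variable-exponent Hölder (Proposition \ref{31}) through the splitting $|u|^{p_1}=|u|^{p_1-1}\cdot|u|$ against the coercive $|u|^{p_2}$ term, and invoking the measure bounds $|A_r|\le Cr^{n-d}$ and $|\partial\Omega\cap A_r|\le Cr^{n-d-1}$ from \eqref{51}, the dominant error is controlled by a product of the form $C r^{\alpha}\cdot(\text{uniformly bounded factor})\cdot\|u\|_{L^{p_2}(A_r)}$ in which the exponent $\alpha$ is the exact quantity that is strictly positive under hypothesis \eqref{30}; indeed, pairing $p_1/(p_2-p_1+1)$ against $n-d$ via \eqref{51} gives $\alpha>0$. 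Since $\|u\|_{L^{p_2}(A_r)}\to 0$ by absolute continuity of the integral, one obtains $\mathcal{E}_r\to 0$. The boundary error is treated by the analogous argument based on the last inequality in \eqref{51}, with $n-d$ replaced by $n-d-1$ and matched by the larger exponent $q_2$ in \eqref{9} as guaranteed by \eqref{24}.

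Monotone convergence in the fundamental inequality as $r\to 0$ then delivers the global bounds $\int_\Omega|\nabla u|^{p_1}+\int_\Omega|u|^{p_2}+\int_{\partial\Omega}|u|^{q_2}<\infty$, proving $u\in W^{1,p_1(\cdot)}(\Omega)$. To conclude removability I would rerun the cut-off procedure with test function $\psi\,\xi_r$ for arbitrary $\psi\in W^{1,p_1(\cdot)}(\Omega)\cap L^\infty(\Omega)$, and pass to the limit using dominated convergence and the global integrability just obtained; the antisymmetry \eqref{23} of $A$ in its third variable is what preserves the structural estimates under the positive/negative decompositions needed here. The main obstacle is the delicate matching between the variable-exponent Young/Hölder estimates and the geometric bound \eqref{51} so that the power of $r$ in $\mathcal{E}_r$ is strictly positive precisely under \eqref{30}; because $p_i,q_i$ are non-constant, every estimate must be performed pointwise and then reassembled via Propositions \ref{31} and \ref{32}, with $p_1^\pm, p_2^\pm, q_1^\pm, q_2^\pm$ playing distinct roles in the small- and large-value regimes of $u$.
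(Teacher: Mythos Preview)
Your proposal does not address the stated proposition at all. Proposition~\ref{32} is the elementary norm--modular inequality for the variable exponent Lebesgue space: if $\lambda=\Vert u\Vert_{L^{p(\cdot)}(\Omega)}$, then $\rho_p(u/\lambda)=1$ (when $p^+<\infty$ the infimum in the Luxemburg norm is attained), and since for every $x$ one has $\lambda^{p^-}\le \lambda^{p(x)}\le \lambda^{p^+}$ when $\lambda\ge 1$ (and the reverse when $\lambda<1$), integrating $|u(x)|^{p(x)}=\lambda^{p(x)}|u(x)/\lambda|^{p(x)}$ gives the result. The paper does not even prove this proposition; it is quoted verbatim from \cite[Theorem~1.3]{fan}.

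What you have written is instead a sketch of a proof of Theorem~\ref{34}, the main removability result. Even read as such, it has a genuine gap. Your argument tries to show directly, via a single cut-off test with $\varphi_r=u\,\xi_r^s$ and the measure bounds \eqref{51}, that the error term $\mathcal{E}_r\to 0$. But the estimate $\mathcal{E}_r\le C r^{-p_1}\int_{A_r}|u|^{p_1}\,\dx+\cdots$ cannot be closed this way: on the annulus $A_r$ the solution $u$ is only known to be \emph{locally} bounded away from $\Gamma$, and the quantity $\|u\|_{L^{p_2}(A_r)}$ that you invoke is not a priori finite, let alone $o(1)$, before the global integrability has been established. The paper resolves this by a two-stage argument that your sketch skips entirely: first Proposition~\ref{19} and Theorem~\ref{21} prove the pointwise decay $|u(x)|\le C\,\operatorname{dist}(x,\Gamma)^{-\tau}$ via a De~Giorgi-type level-set iteration (Lemma~\ref{18}); then Lemma~\ref{29} uses this decay with a \emph{logarithmic} cut-off $\psi_r(t)=\tfrac{2}{\ln(1/r)}\ln(t/r)$ and the test function $\varphi=\bigl(\ln\max\{u/\Lambda(\delta),1\}\bigr)\psi_r^\gamma$ to force $u\in L^\infty(\Omega)$. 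Only after $u$ is known to be globally bounded does the paper run the cut-off argument you describe to get $|\nabla u|\in L^{p_1(\cdot)}(\Omega)$ and to pass to the limit in the weak formulation. Without the intermediate $L^\infty$ bound, the power of $r$ in your $\mathcal{E}_r$ is not positive under \eqref{30} alone; condition \eqref{30} is calibrated to the logarithmic test in Lemma~\ref{29}, not to the linear cut-off you use.
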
 

The variable exponent Sobolev space $W^{1, p(\cdot)}(\Omega)$ is the class of all functions $u \in L^{p(\cdot)}(\Omega)$ which have the property $\vert    \nabla u\vert     \in L^{p(\cdot)}(\Omega)$. The space $W^{1, p(\cdot)}(\Omega)$ is a Banach space equipped with the norm
$$
\Vert      u\Vert      _{W^{1, p(\cdot)} (\Omega)}=\Vert      u\Vert      _{L^{p(\cdot)}(\Omega)}+\Vert      \nabla u\Vert      _{L^{p(\cdot)}(\Omega)} .
$$
More precisely, we have
\begin{proposition}(see \cite[Theorem 3.1]{kova}.) Let $p \in \mathbf{P}\left(\Omega\right)$. The space $W^{1,p(\cdot)} (\Omega)$ is a Banach space, which is separable if $p\in L^{\infty} \left(\Omega\right)$ and reflexive if $1< p^- \leq p^+  <\infty$.
\end{proposition}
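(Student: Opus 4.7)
The plan is to reduce all three assertions to the corresponding properties of the scalar space $L^{p(\cdot)}(\Omega)$ via the canonical isometric embedding
$$
T: W^{1,p(\cdot)}(\Omega) \to \left[L^{p(\cdot)}(\Omega)\right]^{n+1}, \qquad T(u) = (u, \partial_1 u, \ldots, \partial_n u),
$$
where the product space carries the norm $\|(v_0, \ldots, v_n)\| = \sum_{i=0}^n \|v_i\|_{L^{p(\cdot)}(\Omega)}$. By the very definition of the $W^{1,p(\cdot)}$-norm, $T$ is an isometry onto its image. Granting that $L^{p(\cdot)}(\Omega)$ is a Banach space (a fact quoted from \cite{kova}, Theorem 2.5, before Proposition \ref{31}), everything comes down to showing that $T\bigl(W^{1,p(\cdot)}(\Omega)\bigr)$ is a closed subspace of $[L^{p(\cdot)}(\Omega)]^{n+1}$, and then invoking the known separability and reflexivity of $L^{p(\cdot)}(\Omega)$ under the stated assumptions on $p$.

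The first step is completeness. Let $\{u_k\}$ be Cauchy in $W^{1,p(\cdot)}(\Omega)$; then $\{u_k\}$ and each $\{\partial_i u_k\}$ are Cauchy in $L^{p(\cdot)}(\Omega)$, hence converge in norm to $u, v_1, \ldots, v_n \in L^{p(\cdot)}(\Omega)$. I would show $v_i = \partial_i u$ in the distributional sense: for any $\varphi \in C_c^\infty(\Omega)$ both $\varphi$ and $\partial_i\varphi$ are bounded with compact support, hence belong to $L^{p'(\cdot)}(\Omega)$, and Proposition \ref{31} yields
$$
\left| \int_\Omega (u_k - u)\, \partial_i \varphi\, \dx \right| \leq 2\, \|u_k - u\|_{L^{p(\cdot)}(\Omega)}\, \|\partial_i\varphi\|_{L^{p'(\cdot)}(\Omega)} \longrightarrow 0,
$$
and similarly $\int_\Omega (\partial_i u_k - v_i)\varphi\, \dx \to 0$. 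Passing to the limit in $\int_\Omega u_k \partial_i \varphi\, \dx = -\int_\Omega (\partial_i u_k)\varphi\, \dx$ gives $v_i = \partial_i u$, so $u \in W^{1,p(\cdot)}(\Omega)$ and $u_k \to u$ in its norm. Equivalently, $T\bigl(W^{1,p(\cdot)}(\Omega)\bigr)$ is closed in $[L^{p(\cdot)}(\Omega)]^{n+1}$.

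With completeness in hand, separability and reflexivity transfer along $T$. If $p \in L^\infty(\Omega)$, then $p^+ < \infty$, and $L^{p(\cdot)}(\Omega)$ is separable (cf. \cite{kova}); a finite product of separable metric spaces is separable, and any subset of a separable metric space is itself separable, so $W^{1,p(\cdot)}(\Omega) \cong T\bigl(W^{1,p(\cdot)}(\Omega)\bigr)$ is separable. If $1 < p^- \leq p^+ < \infty$, then $L^{p(\cdot)}(\Omega)$ is reflexive (being uniformly convex in that range, cf. \cite{kova}); a finite product of reflexive Banach spaces is reflexive, and a closed subspace of a reflexive Banach space is reflexive, so $W^{1,p(\cdot)}(\Omega)$ is reflexive.

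The only genuinely non-routine point is the preservation of the distributional derivative under $L^{p(\cdot)}$-convergence used in the completeness step; this is where the variable-exponent Hölder inequality of Proposition \ref{31} is essential, since one cannot simply integrate against a generic $L^{p'(\cdot)}$ function on an unbounded-exponent set without care. Once this is settled, the separability and reflexivity are purely abstract consequences of the closed-range property of the isometry $T$.
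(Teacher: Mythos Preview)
Your argument is correct and is precisely the standard proof (indeed, it is essentially the proof given in the cited reference \cite{kova}). Note, however, that the paper itself offers no proof of this proposition at all: it is stated with the attribution ``(see \cite[Theorem 3.1]{kova})'' and used as a black box, so there is nothing in the paper to compare your approach against.
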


Next we will see the definitions that we use in this work. Firstly we will make some observations regarding to the trace.  Let $p\in \mathbf{P}\left(\Omega\right)$. Obviously $W^{1,p(\cdot)}\left(\Omega\right) \subset W^{1,1}\left(\Omega\right)$ because $ p^- \geq 1$. From $W^{1,1} \left( \Omega \right) \rightarrow L^1 \left( \partial \Omega \right)$ we know that for all $u\in W^{1,p(\cdot)}\left(\Omega\right)$ there already holds $u\vert    _{\partial \Omega} \in L^1 \left(\partial \Omega \right)$. Thus for $W^{1,p(\cdot)} \left( \Omega \right)$, the trace $u\vert    _{\partial \Omega}$ has definite meaning; see \cite[p. 1398]{fanx}.

Define
\begin{equation*}
L^{p(\cdot)} _{\loc} \left( \bar{\Omega} \backslash \Gamma \right):= \{u:\Omega \rightarrow \mathbb{R}  \:\vert    \:  u\in L^{p(\cdot)}  \left(U\right) 
 \text { for all open subset } U\subset \Omega \text { with } \bar{U} \cap \Gamma = \emptyset \}
\end{equation*}
and
\begin{equation*}
W^{1,p(\cdot)} _{\loc} \left( \bar{\Omega} \backslash \Gamma \right):= \{u:\Omega \rightarrow \mathbb{R}  \:\vert    \:  u\in W^{1,p(\cdot)}  \left(U\right) 
\text { for all open subset } U\subset \Omega \text { with } \bar{U} \cap \Gamma = \emptyset \}.
\end{equation*}
Similarly we define $L^{\infty} _{\loc} \left( \bar{\Omega} \backslash \Gamma \right)$.

Let  $v : \Omega \rightarrow \mathbb{R}$ be a function, we call $\spt v =  \overline{\{x\in \Omega \:\vert    \: v(x) \neq 0\}}$ the \textit{support} of $v$. For $E\subset \mathbb{R}^n$ and $x\in \mathbb{R}^n$, we denote by $\dist (x,E)$ the Euclidean distance from $x$ to $E$.

\begin{definition}
We will say that $u \in W^{1, p_1(\cdot)} _{\loc} \left(\bar{\Omega}\backslash \Gamma \right)\cap L_{\loc}^{\infty}\left(\bar{\Omega} \backslash \Gamma \right)$ is a (weak) solution  of equation \eqref{1} in $\bar{\Omega} \backslash \Gamma$ if  
\begin{equation} \label{2}
\begin{split}
 \int_{\Omega}\left\langle A(\cdot , u , \nabla u) , \nabla \varphi \right\rangle + a( \cdot , u ) \varphi +  g(\cdot, u) \varphi  \dx  
+ \int _{\partial \Omega } b(\cdot , u) \varphi + h(\cdot , u) \varphi \ds = 0
\end{split}
\end{equation}
for all $\varphi \in W_{\loc}^{1, p_1(\cdot)}\left(\bar{\Omega} \backslash \Gamma \right) \cap L_{\loc}^{\infty}\left(\bar{\Omega} \backslash \Gamma \right)$, with   $\spt \varphi \subset \bar{\Omega} \backslash\Gamma$.
\end{definition}
\noindent Let us observe that the trace  of $u\in W^{1,p_1 (\cdot)} _{\loc} (\bar{\Omega} \backslash \Gamma) \cap L_{\loc}^{\infty}\left(\bar{\Omega} \backslash \Gamma \right)$ possibly is not defined on $\partial \Omega$, however it is defined on $\{x\in \partial \Omega \:\vert    \: \dist (x , \Gamma) >r \}$, and is essentially bounded, for small enough values $r>0$.
\begin{definition}
We will say that the solution $u$ of equation \eqref{1} in $\bar{\Omega} \backslash \Gamma$ has a removable singularity at  $\Gamma$: if $u \in W^{1, p_1(\cdot)} _{\loc} \left(\bar{\Omega}\backslash \Gamma \right)$ $\cap L^{\infty} _{\loc} \left(\bar{\Omega}\backslash \Gamma \right)$ implies $u \in W^{1, p_1(\cdot)}  \left(\Omega \right)$ $\cap L^{\infty}  \left(\Omega \right)$ and the equality \eqref{2} is fulfilled for all $\varphi \in W^{1, p_1(\cdot)}(\Omega) \cap L^{\infty}(\Omega)$.
\end{definition}

\section{The behavior of solutions near the  singular set}
The main result of this section is Theorem \ref{21}, which  will be used in the proof of Theorem \ref{34}. We begin with the following results.

\begin{lemma} \label{18} (see \cite[p. 1004]{fab}).
Let $0<\theta<1, \sigma>0, \xi(h)$ be a nonnegative function on the interval $\left[1 / 2 , 1\right]$, and let
$$
\xi(k) \leq C_0 (h-k)^{-\sigma}(\xi(h))^{\theta}, \quad 1 / 2 \leq k<h \leq 1 ,
$$
for some positive constant $C_0$. Then, there exists $C_1(\sigma, \theta)>0$ such that
$$
\xi\left(1 / 2 \right) \leq C_1 C_0 ^{\frac{1}{1-\theta}}.
$$
\end{lemma}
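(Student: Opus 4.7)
The plan is to apply a standard bootstrap iteration that converts the self-improving inequality into a closed bound by exploiting the contractive exponent $\theta<1$. First I would fix the geometric sequence $k_0 = 1/2$ and $k_{i+1} = k_i + 2^{-i-2}$, so that $k_i = 1 - 2^{-i-1} \in [1/2, 1)$ with $k_{i+1}-k_i = 2^{-i-2}$. Applying the hypothesis to the pair $(k,h)=(k_i,k_{i+1})$ yields
\[
\xi(k_i) \;\leq\; C_0 \, 2^{\sigma(i+2)}\, \xi(k_{i+1})^{\theta}.
\]

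Next I would iterate. Taking logarithms and chaining $N$ steps starting from $k_0=1/2$ gives
\[
\log \xi(1/2) \;\leq\; \Bigl(\sum_{j=0}^{N-1}\theta^j\Bigr) \log C_0 \;+\; \sigma \log 2 \sum_{j=0}^{N-1}(j+2)\theta^j \;+\; \theta^{N} \log \xi(k_N).
\]
Because $0<\theta<1$, as $N\to\infty$ the first sum converges to $1/(1-\theta)$ and the second to the finite value $S(\sigma,\theta)=(2-\theta)/(1-\theta)^2$; these two terms will provide exactly the factors $C_0^{1/(1-\theta)}$ and $2^{\sigma S}$ appearing in the conclusion, so the constant will naturally take the form $C_1(\sigma,\theta)=2^{\sigma S(\sigma,\theta)}$, depending only on $\sigma,\theta$.

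The main obstacle is the residual term $\theta^{N}\log\xi(k_N)$, since $\xi$ is only assumed nonnegative. When $\xi$ is a priori bounded on $[1/2,1]$ — the standard setting in which this lemma is invoked, where $\xi$ arises as a local energy or norm on a relatively compact set — the boundedness of $\log\xi(k_N)$ combined with $\theta^N\to 0$ kills the residual and gives $\xi(1/2) \leq 2^{\sigma S}\, C_0^{1/(1-\theta)}$, which is the claim. In the fully general case one first replaces $\xi$ by $\xi_M := \min(\xi,M)$, observes that $\xi_M$ continues to satisfy the hypothesis with the same $C_0,\sigma,\theta$, obtains the bound uniformly in $M$, and then passes to the limit $M\to\infty$. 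Beyond this point, the proof is pure bookkeeping: recording the telescoped exponents and verifying the convergence of the elementary series $\sum (j+2)\theta^j$.
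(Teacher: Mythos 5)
Your geometric iteration and the computation of the limiting constant $C_1 = 2^{\sigma(2-\theta)/(1-\theta)^2}$ are correct and are the standard route. However, the truncation step in your ``fully general case'' contains a genuine error: $\xi_M := \min(\xi,M)$ does \emph{not} satisfy the hypothesis with the same $C_0,\sigma,\theta$. You do get $\xi_M(k) \leq \xi(k) \leq C_0(h-k)^{-\sigma}\xi(h)^{\theta}$, but truncation only gives $\xi_M(h)^{\theta} \leq \xi(h)^{\theta}$, which points the wrong way: when $\xi(h) > M$ you would need $\xi_M(k) \leq C_0(h-k)^{-\sigma}M^{\theta}$, and nothing in the hypothesis gives that. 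So the bound you obtain for $\xi_M$ cannot be justified, and the limit $M\to\infty$ never gets off the ground.

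Fortunately no truncation is needed: the hypothesis itself kills the residual. Since $\xi$ is a (finite-valued) nonnegative function, $\xi(1)<\infty$, and applying the hypothesis once more with the pair $(k_N,1)$ gives
\begin{equation*}
\xi(k_N) \leq C_0\,(1-k_N)^{-\sigma}\,\xi(1)^{\theta} = C_0\,2^{\sigma(N+1)}\,\xi(1)^{\theta}.
\end{equation*}
If $\xi(1)=0$ the hypothesis forces $\xi(1/2)=0$ and there is nothing to prove; otherwise
\begin{equation*}
\xi(k_N)^{\theta^{N}} \leq C_0^{\theta^{N}}\,2^{\sigma(N+1)\theta^{N}}\,\xi(1)^{\theta^{N+1}} \longrightarrow 1
\end{equation*}
as $N\to\infty$, since $\theta^{N}\to 0$ and $(N+1)\theta^{N}\to 0$. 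This closes the residual term and yields $\xi(1/2)\leq C_0^{1/(1-\theta)}\,2^{\sigma(2-\theta)/(1-\theta)^2}$ exactly as you computed. (In the paper's own use of the lemma, inside the proof of Proposition~\ref{19}, the function $\xi$ is automatically bounded, so your ``bounded case'' observation already suffices there; only the general statement needs the fix above.)
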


\begin{lemma}  If $p\in (1,n)$, $u \in W^{1, p}(\mathcal{U})$ and $u=0$ on $\{x\in \bar{\Omega} \:\vert    \: \dist(x , \Gamma) = 2 r_0 \}$, then
\begin{equation} \label{13}
\left(\int_{\mathcal{U}}\vert    u\vert    ^{q} \dx\right)^{\frac{1}{q}} \leq C \left(\int_{ \mathcal{U}}\vert    \nabla u\vert    ^{p} \dx\right)^{\frac{1}{p}}
\end{equation}
for each $ q \in \left[p, \frac{np}{n-p} \right)$, where $C=C(n,p,q,  \mathcal{U})$ is a positive constant.
\end{lemma}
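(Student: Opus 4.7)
The strategy is to reduce this Poincaré--Sobolev inequality on the subdomain $\mathcal{U}$ to the classical one on the bounded Lipschitz domain $\Omega$, by extending $u$ by zero across the \emph{interior} portion of $\partial\mathcal{U}$. Put
$$
\Sigma := \{x\in \bar\Omega \,\vert\, \dist(x,\Gamma)=2r_0\},
$$
which contains the part of $\partial\mathcal{U}$ lying inside $\Omega$, and define
$$
\tilde u(x) := \begin{cases} u(x) & \text{if } x\in \mathcal{U},\\ 0 & \text{if } x\in \Omega\setminus \mathcal{U}. \end{cases}
$$
Since $\mathcal{U}$ has Lipschitz boundary, the trace of $u$ on $\partial\mathcal{U}$ is well defined and, by hypothesis, vanishes on $\Sigma$. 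Hence the zero function on $\Omega\setminus\mathcal{U}$ and $u$ on $\mathcal{U}$ have matching traces along the interface $\Sigma\cap\Omega$, so a standard gluing argument for Sobolev functions across a Lipschitz interface yields $\tilde u\in W^{1,p}(\Omega)$ with $\nabla \tilde u = \chi_{\mathcal{U}}\nabla u$ almost everywhere in $\Omega$.

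Since $\Omega$ is a bounded Lipschitz domain and $1<p<n$, the Sobolev embedding gives $W^{1,p}(\Omega)\hookrightarrow L^{np/(n-p)}(\Omega)$, and therefore $W^{1,p}(\Omega)\hookrightarrow L^{q}(\Omega)$ continuously for every $q\in [p,\,np/(n-p))$. Moreover $\tilde u \equiv 0$ on the open set $\Omega\setminus\overline{\mathcal{U}}$; the hypothesis $\{x\in\Omega\,\vert\,\dist(x,\Gamma)=2r_0\}\neq\emptyset$ together with the continuity of $\dist(\cdot,\Gamma)$ and the Lipschitz structure of $\Omega$ ensures that this set has positive Lebesgue measure. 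Poincaré's inequality then gives $\|\tilde u\|_{L^p(\Omega)} \leq C \|\nabla\tilde u\|_{L^p(\Omega)}$. Combining these estimates and using $\nabla\tilde u = \chi_{\mathcal{U}} \nabla u$,
$$
\Bigl(\int_{\mathcal U}|u|^q \dx\Bigr)^{1/q} = \|\tilde u\|_{L^q(\Omega)} \leq C\bigl(\|\tilde u\|_{L^p(\Omega)}+\|\nabla\tilde u\|_{L^p(\Omega)}\bigr) \leq C\|\nabla u\|_{L^p(\mathcal U)},
$$
which is \eqref{13}. The dependence $C=C(n,p,q,\mathcal{U})$ is transparent from the chain.

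The only delicate step is the gluing $\tilde u\in W^{1,p}(\Omega)$; this is precisely where the Lipschitz hypothesis on $\partial\mathcal{U}$ enters, guaranteeing both a well-defined trace on $\Sigma$ and the extension-by-zero property for functions with vanishing trace. Everything else is the textbook Sobolev embedding and Poincaré inequality for bounded Lipschitz domains, so I would devote the bulk of the write-up to checking the gluing carefully and take the subsequent inequalities as standard.
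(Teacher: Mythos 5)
Your proof is correct, but it takes a genuinely different route from the paper. The paper's argument is a one-line compactness argument: assume the inequality fails, take a sequence $u_k$ with $\Vert u_k\Vert_{L^q(\mathcal{U})}=1$ and $\Vert \nabla u_k\Vert_{L^p(\mathcal{U})}\to 0$, use the Rellich--Kondrachov compact embedding $W^{1,p}(\mathcal{U})\hookrightarrow\hookrightarrow L^q(\mathcal{U})$ (valid since $q<np/(n-p)$ and $\mathcal{U}$ is bounded Lipschitz) to extract a strong $L^q$-limit $u$, which then has $\nabla u=0$ and vanishing trace on $\{\dist(\cdot,\Gamma)=2r_0\}$, hence $u\equiv 0$, contradicting $\Vert u\Vert_{L^q}=1$. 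You instead give a constructive argument: extend $u$ by zero across the interior interface to obtain $\tilde u\in W^{1,p}(\Omega)$, then apply the Sobolev embedding and a Poincar\'e inequality on $\Omega$. Your route avoids compactness entirely, yields a constant that is in principle trackable, and would even extend to the endpoint $q=np/(n-p)$ (not needed here); the paper's route is shorter to state and works entirely on $\mathcal{U}$ without any gluing across a Lipschitz interface. One thing both proofs lean on implicitly, and which you should at least flag: your Poincar\'e step requires $\Omega\setminus\overline{\mathcal{U}}$ to have positive measure (and $\Omega$ connected), which follows from $\{x\in\Omega:\dist(x,\Gamma)=2r_0\}\neq\emptyset$ and $\mathcal{U}$ Lipschitz --- a point at such an interior boundary level set has, locally, both $\mathcal{U}$ and its complement on either side of the Lipschitz graph. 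Similarly, for your gluing step to give $\nabla\tilde u=\chi_{\mathcal{U}}\nabla u$, you are using the standard fact that for a Lipschitz subdomain, vanishing trace on the interior part of the boundary is exactly what permits the extension by zero; it is worth citing rather than only asserting.
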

\begin{proof}
The proof is by contradiction, considering that $W^{1, p}(\mathcal{U})$ is compactly embedded in $L^{q}(\mathcal{U})$.
\end{proof}

We define,
$$
V_{\ell,r}= \left\{x\in \Omega \:\vert    \: \left\vert     \dist \left( x , \Gamma \right) - \ell \right\vert    < r \right\}.
$$

\begin{proposition} \label{19}
Assume that the conditions  \eqref{4} - \eqref{9}, \eqref{24}  and \eqref{51}  are satisfied. Suppose that $u \in W^{1, p_1(\cdot)} _{\loc} (\bar{\Omega}\backslash $ $\Gamma )$ $\cap L^{\infty} _{\loc} \left(\bar{\Omega}\backslash \Gamma \right)$ satisfies
\begin{equation} \label{22}
\begin{split}
 \int_{\Omega}\left\langle A(\cdot , u , \nabla u) , \nabla \varphi \right\rangle + a( \cdot , u ) \varphi + &g( \cdot , u) \varphi  \dx  
+ \int _{\partial \Omega} b(\cdot , u) \varphi + h(\cdot , u) \varphi  \ds \leq 0,
\end{split}
\end{equation}
for all $\varphi \in W_{\loc}^{1, p_1(\cdot)}\left(\bar{\Omega} \backslash \Gamma \right) \cap L_{\loc}^{\infty}\left(\bar{\Omega} \backslash \Gamma \right)$, $\varphi \geq 0$, with  $\spt \varphi \subset \bar{\Omega} \backslash\Gamma$. Then, if $0<r < \ell < r_0$ we have the estimate
\begin{equation} \label{42}
\left\Vert       \max \{u,0\} \right\Vert      _{L^\infty \left(  V_{\ell, r/2} \right)}\leq C r^{-\tau},
\end{equation}
where $C=C\left(n , \mu , p_1, p_2 , q_1 , q_2 ,  \mathcal{U}\right)>0$ and $\tau=\tau\left(n , \mu , p_1, p_2 , q_1 , q_2 ,  \mathcal{U}\right)> \tfrac{\max \left\{p^+ _1 , q^+ _1 \right\}}{\min\left\{p_2^-  , q_2^- \right\} -\max \left\{p^+ _1 , q^+ _1 \right\} +1 }$. 

\end{proposition}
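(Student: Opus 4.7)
The approach is a two-layer Moser/De Giorgi iteration in the spirit of \cite{fu, mam}, with the outer loop closed by Lemma \ref{18}. For $r/2 \leq \rho_1 < \rho_2 \leq r$, I would first build a Lipschitz cutoff $\eta = \eta_{\rho_1,\rho_2}$ depending only on $\dist(\cdot,\Gamma)$, equal to $1$ on $V_{\ell,\rho_1}$ and vanishing outside $V_{\ell,\rho_2}$, with $|\nabla \eta| \leq C(\rho_2-\rho_1)^{-1}$. Since $\rho_2 < \ell < r_0$, we have $\overline{V_{\ell,\rho_2}} \cap \Gamma = \emptyset$, so for each $k \geq 0$ and $s \geq 0$ the function $\varphi = \eta^{p_1^+}(u-k)_+^{1+s}$ is a legitimate nonnegative test in \eqref{22}: it belongs to $W^{1,p_1(\cdot)}_{\loc}(\bar\Omega\setminus\Gamma)\cap L^\infty_{\loc}(\bar\Omega\setminus\Gamma)$ and has $\spt\varphi\subset \bar\Omega\setminus\Gamma$.

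I would then derive a Caccioppoli-type estimate by plugging $\varphi$ into \eqref{22}. Coercivity \eqref{4} produces the good term $\mu\int \eta^{p_1^+}(u-k)_+^{s}|\nabla(u-k)_+|^{p_1(\cdot)}\dx$ on the left, while \eqref{7} and \eqref{9}, restricted to $\{u>k\geq 0\}$, contribute coercive terms of type $\int \eta^{p_1^+}(u-k)_+^{p_2(\cdot)+s}\dx$ and $\int_{\partial\Omega}\eta^{p_1^+}(u-k)_+^{q_2(\cdot)+s}\ds$. The growth bounds \eqref{5}, \eqref{6}, \eqref{8} are absorbed via Young's inequality; here the sharp gap in \eqref{24}, namely $\min\{p_2^-,q_2^-\}-\max\{p_1^+,q_1^+\}+1>0$, is precisely what allows the $L^{p_2(\cdot)}$ and $L^{q_2(\cdot)}$ coercivity to dominate the $L^{p_1(\cdot)}$ and $L^{q_1(\cdot)}$ growth uniformly in the cutoff. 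The measure estimates \eqref{51} are used at this stage to convert residual integrals over $V_{\ell,\rho_2}$ and $\partial\Omega\cap V_{\ell,\rho_2}$ into explicit powers of $r$. I would then apply the Sobolev inequality \eqref{13} on $\mathcal{U}$ with $p=p_1^-\in(1,n)$, choosing the target exponent in $[p_1^-,np_1^-/(n-p_1^-))$ so that, in combination with \eqref{30}, it compensates the dimensional loss in \eqref{51}. Together with Hölder, this yields the reverse-Hölder-type recursion
\[
\xi(\rho_1) \leq C\, r^{-\beta}\,(\rho_2-\rho_1)^{-\sigma}\,\xi(\rho_2)^{\theta}, \qquad r/2\leq\rho_1<\rho_2\leq r,
\]
for a suitable $L^{\gamma}$-quantity $\xi(\rho)$ of $(u-K)_+$ on $V_{\ell,\rho}$ with fixed base level $K\geq 1$, with explicit $\beta,\sigma>0$ and $\theta\in(0,1)$ determined by the exponents.

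After rescaling $\rho=rh$ with $h\in[1/2,1]$, Lemma \ref{18} yields $\xi(r/2)\leq C r^{-\beta/(1-\theta)}$. A second, classical De Giorgi iteration on truncation levels $k_j=K(2-2^{-j})$ at the fixed radius $r/2$, again driven by the Caccioppoli--Sobolev pair, upgrades this $L^{\gamma}$ bound to the pointwise estimate $\|\max\{u,0\}\|_{L^\infty(V_{\ell,r/2})}\leq C r^{-\tau}$, the exponent $\tau$ being the cumulative $r$-loss divided by the gap $\min\{p_2^-,q_2^-\}-\max\{p_1^+,q_1^+\}+1$, which matches the claimed lower bound on $\tau$. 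The main technical obstacle is the variable-exponent bookkeeping: since the equation is not homogeneous under scaling, one must repeatedly split integrals according to whether $(u-k)_+\gtrless 1$, as in Proposition \ref{32}, and carefully choose which of $p_i^\pm,q_i^\pm$ governs each regime, so that the coercive gap in \eqref{24} really closes the recursion with $\theta<1$ and an $r$-tracking compatible with \eqref{51} and \eqref{30}.
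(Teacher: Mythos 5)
Your proposal is plausible and would likely work, but it takes a genuinely different route from the paper. The paper does not run a two-layer Moser/De Giorgi iteration. Instead it rescales $u=C_\ast w$ with $C_\ast$ to be fixed later, introduces $z_k=\max\{w-m_t\,\xi(|\dist(\cdot,\Gamma)-\ell|)-k,0\}$ (a truncation of $w$ shifted by a radial cutoff multiplied by the sup $m_t$ on $V_{\ell,tr}$), tests \eqref{22} with $z_k$, and after the Caccioppoli, Sobolev \eqref{13}, trace, and H\"older steps it arrives at a pointwise-in-$k$ inequality like \eqref{17}. The decisive step is then to \emph{integrate this inequality over $k\in[0,\mathcal{K}]$}, using the Fabes--Stroock identities $\frac{\textnormal{d}}{\textnormal{d}k}\int_{\Omega_k}z_k\,\dx=-|\Omega_k|$ and $\frac{\textnormal{d}}{\textnormal{d}k}\int_{\partial\Omega_k\cap\partial\Omega}z_k\,\ds=-|\partial\Omega_k\cap\partial\Omega|$, so the right-hand side becomes an exact derivative and telescopes. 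That single continuous integration in $k$ replaces your entire second (level-set) iteration: it produces directly $m_s\leq C\,m_t^{\theta}/(t-s)^{\sigma}$ with $\theta<1$ for the $L^\infty$-quantity $m_t$, and then Lemma \ref{18} (applied in the $t$-variable) gives $m_{1/2}\leq C$. Finally $C_\ast=r^{-\tau}$ is chosen so that all explicit powers of $r$ coming from \eqref{51} and the Sobolev/trace step cancel, which is how $\tau$ and the bound $\tau>\max\{p_1^+,q_1^+\}/(\min\{p_2^-,q_2^-\}-\max\{p_1^+,q_1^+\}+1)$ are identified. Your approach (reverse-H\"older for an $L^\gamma$-quantity, close the radius loop with Lemma \ref{18}, then a classical De Giorgi iteration on levels $k_j$) should also close, and has the virtue of being the more standard template; but the paper's $k$-integration trick is shorter, avoids the Moser power $(u-k)_+^{1+s}$ and the variable-exponent bookkeeping it entails, and works directly with the $L^\infty$-quantity, so only one iteration (Lemma \ref{18}) is needed. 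One small imprecision in your sketch: a De Giorgi level iteration ``at the fixed radius $r/2$'' still needs a nested family of cutoffs supported in $V_{\ell,\rho}$ for decreasing $\rho\searrow r/2$ to produce admissible test functions with $\spt\varphi\subset\bar\Omega\setminus\Gamma$; this is harmless but should be said.
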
 
\begin{proof}

1.  Let $u=C_\ast w$, where $C_\ast>1$ is a  number that will be determined below.  We assume that $\vert\{x\in V_{\ell , r/2}\:\vert \: w(x) > 0\}\vert \neq 0$, otherwise, \eqref{42} is immediate. Set $\Omega^\prime =\{x \in V_{\ell,r}  \:\vert    \: w(x)>0\}$. Take
$$
m_{t}=\esup \left\{w(x)\:\vert    \: x \in V_{\ell,t r} \cap \Omega^\prime \right\}, \quad 1/2 \leq t \leq 1 .
$$
Let $1/2 \leq s<t \leq 1$.  Define the functions $z:\Omega \rightarrow \mathbb{R}$, $z_k :\Omega \rightarrow \mathbb{R}$ by
\begin{align*}
z(x)&= w(x) - m_{t} \xi \left(\left\vert      \dist ( x ,\Gamma) - \ell \right\vert    \right),\\
z_k (x)&=\left\{ \begin{aligned}
& \max \left\{w(x)-m_{t} \xi \left(\left\vert      \dist ( x ,\Gamma) - \ell \right\vert    \right)  -k, 0\right\}  & & \text { if } x\in  V_{\ell , tr},\\
& 0 & & \text { if } x\in \Omega \backslash V_{\ell , tr},
\end{aligned}
\right. 
\end{align*}
where $0 \leq k \leq \esup _{\Omega^\prime} z$, and $\xi : \mathbb{R} \rightarrow \mathbb{R}$ is a smooth function satisfying: $\xi \equiv 0$ on $(-\infty , s r]$, $\xi \equiv 1$ on $ \left[ \frac{s+t}{2} r , \infty \right)$,
$$
0 \leq \xi \leq 1 \quad \text { and } \quad \left\vert     \xi ^{\prime}\right\vert     \leq \frac{C_1}{r(t-s)} \quad \text { on  }    \mathbb{R},
$$
where $C_1$ is a suitable positive constant. Observe that $z_{k} \in W^{1, p_1 (\cdot)} _{\loc}\left(\bar{\Omega}  \backslash \Gamma  \right)\cap L_{\loc}^{\infty}\left(\bar{\Omega} \backslash \Gamma \right)$ and  $ \spt z_k \subset \bar{\Omega} \cap \{ \left\vert      \dist ( \cdot ,\Gamma) - \ell \right\vert     \leq tr \}\subset \bar{\Omega} \backslash \Gamma$.  It is assumed that $m_{1/2} > 1$. The conclusion is obviously right for the case of $0<m_{1/2}\leq 1$. For simplicity we write $\xi\left( \left\vert      \dist ( x ,\Gamma) - \ell \right\vert     \right) = \xi (x)$ and $\xi ^\prime \left( \left\vert      \dist ( x ,\Gamma) - \ell \right\vert     \right) = \xi ^\prime (x)$. 

\noindent Take $k\in [0, \mathcal{K})$, where $ \mathcal{K}=\sup \{k\in [0, \esup _{\Omega ^\prime} z ]\:\vert \: \vert \{x \in V_{\ell, t r}\:\vert    \: z_{k} (x)>0\}\vert \neq 0\}$. Observe that $ \mathcal{K}\geq m_s \geq m_{1/2}>1$. Substituting $\varphi=z_{k}$ into \eqref{22}, we obtain
\begin{equation*} 
	\int_{\Omega}\left\langle  A(\cdot , u , \nabla u) , \nabla z_k \right\rangle +  a( \cdot , u ) z_k + g(\cdot , u) z_k  \dx  
	+\int _{ \partial \Omega } b(\cdot , u)z_k  + h(\cdot , u) z_k   \ds \leq 0.
\end{equation*}
Denote $\Omega_{k}=\{x \in V_{\ell, t r}\:\vert    \: z_{k} (x) >0\}$. Then,
\begin{align*}
& \int_{\Omega_{k}} \left\langle A\left(\cdot, C_\ast w, C_\ast \nabla w\right)  ,    \nabla w -m_{t} \xi ^{\prime}  \frac{  \dist ( \cdot ,\Gamma) - \ell }{ \left\vert      \dist ( \cdot ,\Gamma) - \ell \right\vert     }\nabla \dist (\cdot, \Gamma)\right\rangle \dx \\
&	+\int_{\Omega_{k}} g(\cdot , u)z_k \dx +  \int _{\partial \Omega _k  \cap \partial \Omega} h(\cdot , u)z_k \ds 
	\leq \int_{\Omega _k}\vert    a( \cdot , u )\vert     z_k  \dx  +  \int _{\partial \Omega _k  \cap \partial \Omega} \vert    b(\cdot , u)\vert     z_k  \ds .
\end{align*}
By \eqref{4} - \eqref{9}, we have
\begin{equation*} \label{36}
\begin{aligned}
&   \int_{\Omega_{k}}  \mu   C_{\ast} ^{p_1 - 1}\vert    \nabla w\vert    ^{p_1} \dx -  \int_{\Omega_{k}} \frac{\mu^{-1} C_1 m_t}{r(t-s)} \left(  \vert    C_\ast \nabla w\vert    ^{p_1-1} +   \vert    C_\ast w\vert    ^{p_1-1} + 1 \right) \dx \\
&   +\int_{\Omega_{k}} \left( \mu \vert    C_\ast w\vert    ^{p_2}  -  \mu ^{-1}  \right)z_k \dx + \int _{\partial \Omega _k  \cap \partial \Omega } \left( \mu \vert    C_\ast w\vert    ^{q_2} - \mu ^{-1}   \right)z_k \ds \\
&    \leq  \int_{\Omega _k} \mu ^{-1} \left(   \vert    C_\ast w\vert    ^{p_1 -1 }  + 1  \right)z_k \dx   +  \int _{\partial \Omega _k  \cap \partial \Omega } \mu ^{-1} \left( \vert    C_\ast w\vert    ^{q_1 -1}   +  1 \right)z_k \ds. 
\end{aligned}
\end{equation*}
Since $m_t > 1$, $C_\ast >1$, and observing that $w\geq k$ and  $m_t\geq w\geq z_k$ on $\Omega _k$,  we have
\begin{equation} \label{37}
\begin{aligned}
&   \int_{\Omega_{k}}     C_{\ast} ^{p_1 - 1}\vert    \nabla w\vert    ^{p_1} \dx + \int_{\Omega_{k}}   C_\ast ^{p_2} k^{p_2}z_k \dx + \int _{\partial \Omega _k  \cap \partial \Omega }   C_\ast ^{q_2} k^{q_2}z_k \ds \\
&   \leq  C_2\left(\mu \right)  \int_{\Omega_{k}} \frac{  m_t C_\ast ^{p_1-1} }{r(t-s)}   \vert    \nabla w\vert    ^{p_1-1} \dx \\
&   + C_2   C_\ast ^{\max \left\{p^+ _1 , q^+ _1 \right\} -1 }  \left[\frac{m_t}{r (t-s)} \right]^{\max \left\{p^+ _1 , q^+ _1 \right\}} \left(\vert    \Omega _k\vert     + \vert    \partial \Omega _k \cap \partial \Omega\vert    \right).
\end{aligned}
\end{equation}
On other hand, using Young's inequality,
\begin{equation} \label{38}
\begin{aligned}
   \int_{\Omega_{k}}   \frac{  m_t C_\ast ^{p_1-1} }{r(t-s)} \vert    \nabla w\vert    ^{p_1-1} \dx 
  \leq \int_{\Omega_{k}}   C_\ast ^{p_1-1} \left\{ C_3 \left(\varepsilon _1 , p_1 \right)\left[\frac{m_t}{r(t-s)}\right]^{p_1} + \varepsilon _1 \vert    \nabla w\vert    ^{p_1}\right\} \dx.
\end{aligned}
\end{equation}
Take  $\varepsilon _1 = \frac{1}{2C_2 }$. Then, by \eqref{37} and \eqref{38},
\begin{align*}
&    \frac{1 }{2}\int_{\Omega_{k}}     C_{\ast} ^{p_1 - 1}  \vert    \nabla w\vert    ^{p_1} \dx + \int_{\Omega_{k}}   C_\ast ^{p_2} k^{p_2}z_k \dx + \int _{\partial \Omega _k  \cap \partial \Omega }   C_\ast ^{q_2} k^{q_2}z_k \ds \\
&     \leq    C_4\left(\mu  ,  p_1 \right)C_\ast ^{\max \left\{p^+ _1 , q^+ _1 \right\} -1 } \left[\frac{m_t}{r (t-s)} \right]^{\max \left\{p^+ _1 , q^+ _1 \right\}} \left(\vert    \Omega _k\vert     + \vert    \partial \Omega _k \cap \partial \Omega\vert    \right).
\end{align*}
Note that $\nabla z_{k}=\nabla w-m_{t}  \xi ^{\prime}  \frac{  \dist ( \cdot ,\Gamma) - \ell }{ \left\vert      \dist ( \cdot ,\Gamma) - \ell \right\vert     }\nabla \dist (\cdot, \Gamma)$ in $\Omega_{k}$, therefore
\begin{equation}\label{11}
\begin{aligned}
&   \frac{1 }{2}\int_{\Omega_{k}}  C_{\ast} ^{p_1 - 1}  \left(\frac{1}{2^{p_1-1}}\left\vert    \nabla z_{k}\right\vert    ^{p_1}-m_{t}^{p_1}\left\vert    \xi ^\prime\right\vert    ^{p_1}\right)  \dx 
	+ \int_{\Omega_{k}}   C_\ast ^{p_2} k^{p_2}z_k \dx + \int _{\partial \Omega _k  \cap \partial \Omega }  C_\ast ^{q_2} k^{q_2}z_k \ds \\
&    \leq   C_4 C_\ast ^{\max \left\{p^+ _1 , q^+ _1 \right\} -1 } \left[\frac{m_t}{r (t-s)} \right]^{\max \left\{p^+ _1 , q^+ _1 \right\}} \left(\vert    \Omega _k\vert     + \vert    \partial \Omega _k \cap \partial \Omega\vert    \right).
\end{aligned}
\end{equation}
We have
\begin{equation}\label{12}
  \left\vert    \nabla z_{k}\right\vert    ^{p_1^{-}}  \leq  1 + \left\vert    \nabla z_{k}\right\vert    ^{p_1} .
\end{equation}
Then, from \eqref{11} and \eqref{12},
\begin{equation} \label{14}
\begin{aligned}
&    \frac{1}{2^{p^+ _1}} \int _{\Omega_{k}}    C_{\ast} ^{p_1 - 1}  \left(\left\vert    \nabla z_{k}\right\vert    ^{p_1 ^- }  - 1 \right) \dx + \int_{\Omega_{k}}   C_\ast ^{p_2} k^{p_2}z_k \dx + \int _{\partial \Omega _k  \cap \partial \Omega }   C_\ast ^{q_2} k^{q_2}z_k \ds \\
&    \leq    C_5\left(\mu , p_1  \right)C_\ast ^{\max \left\{p^+ _1 , q^+ _1 \right\} -1 } \left[\frac{m_t}{r (t-s)} \right]^{\max \left\{p^+ _1 , q^+ _1 \right\}} \left(\vert    \Omega _k\vert     + \vert    \partial \Omega _k \cap \partial \Omega\vert    \right).
\end{aligned}
\end{equation}

On the other hand, let  $\gamma  \in \left(p_1 ^- , \frac{(n-1)p_1 ^-}{n-p_1 ^-} \right]$ and $\alpha \in \left(p_1 ^-,\gamma \right)$ be arbitrary.  Observe that  $\spt z_k \cap \Omega \subset  \Omega \cap \{ \left\vert     \dist (\cdot , \Gamma ) - \ell \right\vert     \leq tr\} \subset \mathcal{U}$. By  \eqref{13}, and since the trace $W^{1, p_1 ^-} (\mathcal{U}) \rightarrow L^\gamma (\partial \mathcal{U})$ is continuous,
\begin{equation}\label{39} 
	\left(\int_{V_{\ell , t r}}z_{k}^{\alpha} \dx\right)^{\frac{1}{\alpha}}   + \left(\int_{\partial V_{\ell , t r}\cap \partial \Omega} z_{k}^{\alpha} \ds \right)^{\frac{1}{\alpha} }
	\leq  C_6 r^{\frac{(n-d-1)(\gamma  - \alpha)}{\gamma \alpha}} \left(\int _{V_{\ell , t r}}\left\vert    \nabla z_{k}\right\vert    ^{p_1^{-}} \dx\right)^{\frac{1}{p_1 ^-}},
\end{equation}
where $C_6 = C_6\left(n, \mu ,    p_1  ,  \gamma , \mathcal{U}\right)>0$.  Using Hölder's inequality, we get
\begin{align}
\int_{V_{\ell, t r}} z_{k} \dx & \leq \left(\int_{V_{\ell, t r}} z_{k}^{ \alpha} \dx\right)^{\frac{1}{\alpha}}\left\vert    \Omega_{k}\right\vert    ^{\frac{\alpha -1}{ \alpha }} \label{40}, \\
\int_{\partial V_{\ell, t r} \cap \partial \Omega} z_{k} \ds & \leq \left(\int_{\partial V_{\ell, t r} \cap \partial \Omega} z_{k}^{ \alpha} \ds \right)^{\frac{1}{\alpha}}\left\vert    \partial \Omega_{k} \cap \partial \Omega \right\vert    ^{\frac{\alpha - 1}{ \alpha}}\label{41}.
\end{align}
From \eqref{14} - \eqref{41}, we have
\begin{equation} \label{15}
\begin{aligned}
&	\frac{C_\ast  ^{p_1^-  - 1}}{2^{p^+ _1}}   \left[  C_6 ^{-1} \left(\left\vert    \Omega _k \right\vert     + \left\vert    \partial \Omega _k \cap \partial \Omega\right\vert    \right)^{-\frac{\alpha -  1}{\alpha}}  r^{-\frac{(n-d-1)(\gamma - \alpha)}{\gamma \alpha}} 
 \left(  \int_{V_{\ell , tr}} z_k \dx +  \int_{\partial V_{\ell , tr} \cap \partial \Omega} z_k \ds\right) \right] ^{p_1^-}  \\
&	+ C_\ast ^{\min\left\{p_2^- , q_2^- \right\}}\min \left\{ k^{p_2^-} , k^{p_2^+ , } , k^{q_2^-} , k^{q_2^+}\right \}  \left(\int _{V_{\ell , tr}} z_k \dx + \int _{\partial V_{\ell , tr} \cap \partial \Omega} z_k \ds \right)\\
&	\leq    C_5 C_\ast ^{\max \left\{p^+ _1 , q^+ _1 \right\} -1 } \left[\frac{m_t}{r (t-s)} \right]^{\max \left\{p^+ _1 , q^+ _1 \right\}} \left(\vert    \Omega _k\vert     + \vert    \partial \Omega _k \cap \partial \Omega\vert    \right)+\frac{C_{\ast} ^{p_1 ^+ -1}}{2^{p_1 ^+}}  \left\vert    \Omega _k\right\vert    .
\end{aligned}
\end{equation}\\

2.  Take $\varepsilon \in (0,1)$. Then, \eqref{15} implies
\begin{equation} \label{16}
\begin{aligned}
&	\varepsilon   C_\ast ^{p_1^-  - 1}\left(   \int_{V_{\ell , tr}} z_k \dx  +  \int_{\partial V_{\ell , tr} \cap \partial \Omega} z_k \ds \right)^{p_1^-}  r^{-p_1^-\frac{(n-d-1)(\gamma - \alpha)}{\gamma \alpha}} \\
&	+ (1-\varepsilon) C_\ast ^{\min\left\{p_2^- , q_2^- \right\}}\min \left\{ k^{p_2^-} , k^{p_2^+ , } , k^{q_2^-} , k^{q_2^+}\right \}\\
&	\cdot \left(\left\vert    \Omega _k \right\vert     + \left\vert    \partial \Omega _k \cap \partial \Omega\right\vert    \right)^{p_1^-\frac{\alpha -  1}{\alpha}} \left(\int _{V_{\ell , tr}} z_k \dx + \int _{\partial V_{\ell , tr} \cap \partial \Omega} z_k \ds \right)\\
&	\leq  C_7 C_\ast ^{\max \left\{p^+ _1 , q^+ _1 \right\} -1 } \left\{\left[\frac{m_t}{r (t-s)} \right]^{\max \left\{p^+ _1 , q^+ _1 \right\}} +1\right\}  \left(\left\vert    \Omega _k \right\vert     + \left\vert    \partial \Omega _k \cap \partial \Omega\right\vert    \right)^{1+p_1^- \frac{\alpha -  1}{\alpha}},
\end{aligned}
\end{equation}
where $C_7 = C_7 \left(C_5 , C_6 ,\mu , p_1\right)>0$. Applying Young's inequality $a^{\varepsilon} b^{1-\varepsilon} \leq \varepsilon a+(1-\varepsilon) b$ in the left-hand side of \eqref{16}, we obtain
\begin{equation*} 
\begin{aligned}
&	\min  \left\{ k^{p_2^- (1-\varepsilon)} \right.  ,  \left. k^{p_2^+ (1-\varepsilon) } , k^{q_2^- (1-\varepsilon)} , k^{q_2^+ (1-\varepsilon)}\right \}   C_\ast ^{(p_1^-  - 1)\varepsilon + \min\left\{p_2^- , q_2^- \right\}(1-\varepsilon)} \\
&	\cdot r^{-p_1^-\frac{(n-d-1)(\gamma - \alpha)}{\gamma \alpha}\varepsilon} \left(   \int_{V_{\ell , tr}} z_k \dx  +  \int_{\partial V_{\ell , tr} \cap \partial \Omega} z_k \ds \right)^{p_1^- \varepsilon + 1 - \varepsilon} \\
&	\leq    C_7 C_\ast ^{\max \left\{p^+ _1 , q^+ _1 \right\} -1 } \left\{\left[\frac{m_t}{r (t-s)} \right]^{\max \left\{p^+ _1 , q^+ _1 \right\}} +1\right\}  \left(\left\vert    \Omega _k \right\vert     + \left\vert    \partial \Omega _k \cap \partial \Omega\right\vert    \right)^{1+p_1^- \frac{\alpha -  1}{\alpha}\varepsilon}.
\end{aligned}
\end{equation*}
Therefore, 
\begin{equation} \label{17}
\begin{aligned}
&	\min  \left\{  k^{\frac{p_2^- (1-\varepsilon)}{\beta}} \right. , \left.k^{\frac{p_2^+ (1-\varepsilon)}{\beta} } , k^{\frac{q_2^- (1-\varepsilon)}{\beta}} , k^{\frac{q_2^+ (1-\varepsilon)}{\beta}}\right\}  
	C_\ast ^{\frac{\min\left\{p_2^- , q_2^- \right\} -\max \left\{p^+ _1 , q^+ _1 \right\} +1 - \left( \min\left\{p_2^- , q_2^- \right\} - p_1^-  +1\right)\varepsilon}{\beta} } \\
&	\leq   (2C_7) ^{\frac{1}{\beta}} \left(   \int_{\Omega _k} z_k \dx  +  \int_{\partial \Omega _k \cap \partial \Omega} z_k \ds \right)^{-\frac{p_1^- \varepsilon + 1 - \varepsilon }{\beta}} 
 r^{p_1^-\frac{(n-d-1)(\gamma - \alpha)}{\gamma \alpha}\frac{\varepsilon}{\beta}} \left[\frac{m_t}{r (t-s)} \right]^{\frac{\max \left\{p^+ _1 , q^+ _1 \right\}}{\beta}}   \left(\left\vert    \Omega _k \right\vert     + \left\vert    \partial \Omega _k \cap \partial \Omega\right\vert    \right),
\end{aligned}
\end{equation} 
where  $\beta =1+p_1^- \frac{\alpha -  1}{\alpha}\varepsilon$.\\

3.  Integrating \eqref{17} with respect to $k$,

\begin{equation*} 
\begin{aligned}
&	C_\ast  ^{\frac{\min\left\{p_2^-  , q_2^- \right\} -\max \left\{p^+ _1 , q^+ _1 \right\} +1 - \left( \min\left\{p_2^- , q_2^- \right\} - p_1^-  +1\right)\varepsilon }{\beta}}  
 \int _0 ^{\mathcal{K}} \min  \left\{  k^{\frac{p_2^- (1-\varepsilon)}{\beta}} \right. , \left.k^{\frac{p_2^+ (1-\varepsilon)}{\beta} } , k^{\frac{q_2^- (1-\varepsilon)}{\beta}} , k^{\frac{q_2^+ (1-\varepsilon)}{\beta}}\right\}  \textnormal{d}k \\
&	\leq  (2 C_{7} )^{\frac{1}{\beta}}  r^{p_1^-\frac{(n-d-1)(\gamma - \alpha)}{\gamma \alpha}\frac{\varepsilon}{\beta}} \left[\frac{m_t}{r (t-s)} \right]^{\frac{\max \left\{p^+ _1 , q^+ _1 \right\}}{\beta}}  \\
&	\cdot \int _0 ^{\mathcal{K}} \left(   \int_{\Omega _k} z_k \dx  +  \int_{\partial \Omega _k \cap \partial \Omega} z_k \ds \right)^{-\frac{p_1^- \varepsilon + 1 - \varepsilon }{\beta}}\left(\left\vert    \Omega _k \right\vert     + \left\vert    \partial \Omega _k \cap \partial \Omega\right\vert    \right) \textnormal{d}k.
\end{aligned}
\end{equation*} 
Let us consider the equalities  
$$
\frac{\textnormal{d}}{\textnormal{d} k}\left(\int_{\Omega_{k}} z_{k} \dx\right)=-\left\vert    \Omega_{k}\right\vert    \quad \text { and } \quad \frac{\textnormal{d}}{\textnormal{d} k}\left(\int_{\partial \Omega_{k} \cap \partial \Omega} z_{k} \ds\right)=-\left\vert    \partial \Omega_{k} \cap \partial \Omega\right\vert    .
$$
Since $1-\frac{p_1^- \varepsilon + 1 - \varepsilon }{\beta}>0$ and  $\mathcal{K}>1$, 
\begin{equation*}
\begin{aligned}
&	\mathcal{K} ^{1+ \frac{\min \left\{ p_2 ^- , q_2 ^-\right\}(1-\varepsilon)}{\beta} } C_\ast  ^{\frac{\min\left\{p_2^-  , q_2^- \right\} -\max \left\{p^+ _1 , q^+ _1 \right\} +1 - \left( \min\left\{p_2^- , q_2^- \right\} - p_1^-  +1\right)\varepsilon }{\beta}} \\
&	\leq  \varepsilon ^{-1} C_{8}   r^{p_1^-\frac{(n-d-1)(\gamma - \alpha)}{\gamma \alpha}\frac{\varepsilon}{\beta}}       \left[\frac{m_t}{r (t-s)} \right]^{\frac{\max \left\{p^+ _1 , q^+ _1 \right\}}{\beta}}  
  \left(   \int_{\Omega _0} z_0 \dx  +  \int_{\partial \Omega _0 \cap \partial \Omega} z_0 \ds \right)^{1-\frac{p_1^- \varepsilon + 1 - \varepsilon }{\beta}},
\end{aligned}
\end{equation*}
where $C_{8}=C_{8} \left(n, \mu ,  \alpha , \gamma , p_1 , p_2 , q_2 , \mathcal{U}\right)>0$. We note  $\mathcal{K} \geq m_{s}$. Apply the estimates 
$$
\int_{\Omega_{0}} z_{0} \dx \leq m_{t}\left\vert    V_{\ell,r}\right\vert    \quad \text { and } \quad \int_{\partial \Omega _0 \cap \partial \Omega} z_0 \ds \leq m_{t}\left\vert    \partial V_{\ell , r} \cap \partial \Omega\right\vert     ,
$$ 
we have
\begin{equation*}
\begin{aligned}
&	m_s ^{1+ \frac{\min \left\{ p_2 ^- , q_2 ^-\right\}(1-\varepsilon)}{\beta} } C_\ast  ^{\frac{\min\left\{p_2^-  , q_2^- \right\} -\max \left\{p^+ _1 , q^+ _1 \right\} +1 - \left( \min\left\{p_2^- , q_2^- \right\} - p_1^-  +1\right)\varepsilon }{\beta}} \\
&	\leq   C_{9}   r^{p_1^-\frac{(n-d-1)(\gamma - \alpha)}{\gamma \alpha}\frac{\varepsilon}{\beta}}       \left[\frac{m_t}{r (t-s)} \right]^{\frac{\max \left\{p^+ _1 , q^+ _1 \right\}}{\beta}}  
  m_t ^{1-\frac{p_1^- \varepsilon + 1 - \varepsilon }{\beta}} r^{(n-d-1)\left( 1-\frac{p_1^- \varepsilon + 1 - \varepsilon }{\beta}\right)},
\end{aligned}
\end{equation*}
where  $C_{9}=C_{9} \left(n , \mu  , \varepsilon  , \alpha , \gamma , p_1 , p _2 , q _2 , \mathcal{U} \right)>0$. 

Now we take  $C_\ast>0$ such that
$$
	C_\ast  ^{\frac{\min\left\{p_2^-  , q_2^- \right\} -\max \left\{p^+ _1 , q^+ _1 \right\} +1 - \left( \min\left\{p_2^- , q_2^- \right\} - p_1^-  +1\right)\varepsilon }{\beta}} 
	= r^{p_1 ^- \frac{(n-d-1)(\gamma - \alpha)}{\gamma \alpha}\frac{\varepsilon}{\beta} -\frac{\max \left\{p_1 ^+ , q_1 ^+\right\}}{\beta} + (n-d-1)\left( 1-\frac{p_1^- \varepsilon + 1 - \varepsilon }{\beta}\right)}.
$$
Hence $C_\ast = r^{-\tau}$, where 
$$
\begin{aligned}
&	\tau  = -\left[ p_1^-\frac{(n-d-1)(\gamma - \alpha)}{\gamma \alpha}\frac{\varepsilon}{\beta} - \frac{\max \left\{p^+ _1 , q^+ _1 \right\}}{\beta} + (n-d-1)\left( 1-\frac{p_1^- \varepsilon + 1 - \varepsilon }{\beta}\right)\right]\\
&	\cdot\left[\frac{\min\left\{p_2^-  , q_2^- \right\} -\max \left\{p^+ _1 , q^+ _1 \right\} +1 - \left( \min\left\{p_2^- , q_2^- \right\} - p_1^-  +1\right)\varepsilon }{\beta}\right]^{-1}\\
&	> \frac{\max \left\{p^+ _1 , q^+ _1 \right\}}{\min\left\{p_2^-  , q_2^- \right\} -\max \left\{p^+ _1 , q^+ _1 \right\} +1 }>0,
\end{aligned}
$$
if $1> (n-d-1)\left(1 - \frac{p_1^-}{\gamma}\right) $ and $\varepsilon \in \left(0 , \frac{\min\left\{p_2^-  , q_2^- \right\} -\max \left\{p^+ _1 , q^+ _1 \right\} +1}{\min\left\{p_2^- , q_2^- \right\} - p_1^-  +1} \right)$.

On the other hand,
$$
m_{s} \leq C_{9}^{\frac{\beta}{\max \left\{p^+ _1 , q^+ _1 \right\}}\sigma} \frac{m_{t}^{\theta}}{(t-s)^{\sigma}},
$$
where
$$
\begin{aligned}
\theta=&\left[\frac{\max \left\{p^+ _1 , q^+ _1 \right\}}{\beta}+ 1-\frac{p_1^- \varepsilon + 1 - \varepsilon }{\beta}\right] \left[1+ \frac{\min \left\{ p_2 ^- , q_2 ^-\right\}(1-\varepsilon)}{\beta}\right]^{-1} 
<1 , \\
\sigma=&\left[\frac{\max \left\{p^+ _1 , q^+ _1 \right\}}{\beta}\right] \left[1+ \frac{\min \left\{ p_2 ^- , q_2 ^-\right\}(1-\varepsilon)}{\beta}\right]^{-1}. 
\end{aligned}
$$
By virtue of Lemma \ref{18}, we derive 
$$
m_{1/2} \leq C_{10} \left(n , \mu ,   p _1 , p_2  , q _1 ,  q _2  , \mathcal{U} \right).
$$
From the substitution $u=C_\ast w$ we obtain
$$
\esup \left\{u(x)\:\vert    \: x \in V_{\ell , r/2} \cap \Omega^{\prime}\right\}=C_\ast m_{1/2} \leq C_{10} C_\ast =  C_{10} r^{-\tau}.
$$
Therefore, we  conclude  the proof of the Proposition \ref{19}.
\end{proof}

The next theorem follows easily from the Proposition \ref{19}.

\begin{theorem}\label{21}
Suppose that the conditions \eqref{4} - \eqref{24} and \eqref{51} are satisfied. Let 
$u \in W^{1, p_1 (\cdot)} _{\loc} \left(\bar{\Omega}\backslash \Gamma \right)$ $\cap L^{\infty} _{\loc} \left(\bar{\Omega}\backslash \Gamma \right)$ be a solution of equation \eqref{1} in $\bar{\Omega} \backslash \Gamma$. Then, in $\left\{x\in \Omega \:\vert    \: 0<\dist ( x , \Gamma ) < r_0\right\}$, the following inequality holds almost everywhere:
\begin{equation}\label{25}
\vert    u(x)\vert     \leq C \dist (x,\Gamma)^{-\tau} ,
\end{equation}
where $C=C\left(n , \mu , p_1 ,  p_2 , q_1 ,  q_2 , \mathcal{U}\right)>0$ and $\tau=\tau \left(n , \mu , p_1 ,  p_2 , q_1 ,  q_2 , \mathcal{U}\right)> \frac{\max \left\{p^+ _1 , q^+ _1 \right\}}{\min\left\{p_2^-  , q_2^- \right\} -\max \left\{p^+ _1 , q^+ _1 \right\} +1 }$. 
\end{theorem}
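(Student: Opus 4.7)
The plan is to deduce \eqref{25} from Proposition \ref{19} by applying it to both $u$ and $-u$, and then using the freedom to choose $\ell$ and $r$ to cover an arbitrary point $x$ near $\Gamma$. Since $u$ is a weak solution of \eqref{1}, testing \eqref{2} against any nonnegative admissible $\varphi$ yields equality (in particular the inequality $\leq 0$) in \eqref{22}, so Proposition \ref{19} applies directly to $u$ and gives
\[
\left\Vert \max\{u, 0\}\right\Vert_{L^\infty(V_{\ell, r/2})} \leq C r^{-\tau}, \qquad 0 < r < \ell < r_0.
\]

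The only nontrivial step is obtaining the analogous bound on $\max\{-u,0\}$, for which the antisymmetry assumption \eqref{23} is crucial. Setting $v := -u$ and
\[
\tilde A(x, v, \eta) := A(x, -v, \eta), \quad \tilde a(x, v) := -a(x, -v), \quad \tilde g(x, v) := -g(x, -v),
\]
and similarly $\tilde b, \tilde h$, one checks using \eqref{23} that $v$ is a weak solution (in the sense of the obvious analogue of \eqref{2}) of the system associated to $\tilde A, \tilde a, \tilde g, \tilde b, \tilde h$ on $\bar\Omega \setminus \Gamma$. A routine verification shows that these modified coefficients satisfy the same structural hypotheses \eqref{4}--\eqref{9} and \eqref{23} with identical exponents $p_1, p_2, q_1, q_2$ and the same constant $\mu$. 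Applying Proposition \ref{19} to $v$ thus yields
\[
\left\Vert \max\{-u, 0\}\right\Vert_{L^\infty(V_{\ell, r/2})} = \left\Vert \max\{v, 0\}\right\Vert_{L^\infty(V_{\ell, r/2})} \leq C r^{-\tau}.
\]

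Combining the two one-sided bounds gives $\Vert u \Vert_{L^\infty(V_{\ell, r/2})} \leq C r^{-\tau}$ for all $0 < r < \ell < r_0$. To derive the pointwise inequality \eqref{25}, fix any $x \in \Omega$ with $0 < \dist(x, \Gamma) < r_0$, set $\ell := \dist(x, \Gamma)$ and $r := \ell/2$; since $|\dist(x, \Gamma) - \ell| = 0 < r/2$, we have $x \in V_{\ell, r/2}$, so
\[
|u(x)| \leq C (\ell/2)^{-\tau} = 2^\tau C\, \dist(x, \Gamma)^{-\tau}
\]
for almost every such $x$, which is \eqref{25}. The main (and essentially only) obstacle is the reduction of the $-u$ case to the $u$ case, which hinges entirely on the oddness condition \eqref{23}; once this is done, Theorem \ref{21} follows immediately from the annular estimate \eqref{42}.
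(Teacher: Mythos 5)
Your proof is correct and is exactly the argument the paper intends; the paper merely states that Theorem \ref{21} ``follows easily from Proposition \ref{19}'' without spelling out the details. You correctly identify that the oddness assumption \eqref{23} is precisely what lets Proposition \ref{19} be applied to $-u$ (after checking that the reflected coefficients $\tilde A,\tilde a,\tilde g,\tilde b,\tilde h$ satisfy \eqref{4}--\eqref{9} with the same $\mu$ and exponents), and the covering choice $\ell=\dist(x,\Gamma)$, $r=\ell/2$ is the natural bridge from the annular $L^\infty$ bound \eqref{42} to the pointwise a.e.\ estimate \eqref{25}.
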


Proceeding in the same way as in  the Proposition \ref{19} and Theorem \ref{21}, we have the following results.

\begin{proposition} \label{26} 
Suppose that the conditions \eqref{4} - \eqref{7}  and \eqref{51} are satisfied, additionally
\begin{equation} \label{50}
p_2 ^- - p_1 ^+ +1 >0.
\end{equation} 
Assume that $u \in W^{1, p_1 (\cdot)} _{\loc} \left(\bar{\Omega}\backslash \Gamma \right)$ $\cap L^{\infty} _{\loc} \left(\bar{\Omega}\backslash \Gamma \right)$ satisfies
\begin{equation} \label{35}
 \int_{\Omega}\left\langle A(\cdot , u , \nabla u) , \nabla \varphi \right\rangle + a( \cdot , u ) \varphi + g(\cdot , u) \varphi  \dx \leq 0,
\end{equation}
for all $\varphi \in W_{\loc}^{1, p_1 (\cdot)}\left(\bar{\Omega} \backslash \Gamma \right) \cap L_{\loc}^{\infty}\left(\bar{\Omega} \backslash \Gamma \right)$, $\varphi \geq 0$, with $\spt \varphi \subset \bar{\Omega} \backslash\Gamma$. Then, if $0<r < \ell < r_0$ we have the estimate
$$
\left\Vert      \max \{u,0\} \right\Vert      _{L^\infty \left( V_{\ell, r/2}\right)} \leq C r^{-\tau},
$$
where $C=C\left(n , \mu , p_1 ,  p_2 , \mathcal{U}\right)>0$ and $\tau=\tau \left(n , \mu , p_1 ,  p_2 , \mathcal{U}\right)$  $> \frac{p^+ _1}{p_2^-  - p^+ _1 +1 }$. 
\end{proposition}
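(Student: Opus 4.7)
The plan is to repeat the De Giorgi / Moser–type iteration carried out in Proposition~\ref{19}, but dropping every boundary integral on $\partial\Omega$ and every $q_1,q_2$ contribution, which collapses the argument to the simpler interior situation in~\cite{fu, mam}. As before, I set $u = C_\ast w$ with $C_\ast>1$ to be chosen, assume $\vert\{x\in V_{\ell,r/2}\mid w(x)>0\}\vert\neq 0$, put $\Omega' = \{x\in V_{\ell,r}\mid w(x)>0\}$, and for $1/2\leq s<t\leq 1$ define the cut-off $\xi$ exactly as in the proof of Proposition~\ref{19}. The test function
\[
z_k(x)=\max\bigl\{w(x)-m_t\,\xi\bigl(\vert\operatorname{dist}(x,\Gamma)-\ell\vert\bigr)-k,\,0\bigr\}\chi_{V_{\ell,tr}}(x),\qquad 0\leq k<\mathcal{K},
\]
has $\operatorname{supp}z_k\subset\bar{\Omega}\setminus\Gamma$ and lies in $W^{1,p_1(\cdot)}_{\loc}(\bar{\Omega}\setminus\Gamma)\cap L^\infty_{\loc}(\bar{\Omega}\setminus\Gamma)$, so it is admissible in~\eqref{35}.

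Substituting $\varphi=z_k$ into~\eqref{35} and invoking the structure conditions \eqref{4}--\eqref{7} gives a Caccioppoli-type inequality of the form
\[
\int_{\Omega_k} C_\ast^{p_1-1}\vert\nabla w\vert^{p_1}\,\dx+\int_{\Omega_k} C_\ast^{p_2}k^{p_2}z_k\,\dx
\;\leq\; C_2\int_{\Omega_k}\frac{m_t C_\ast^{p_1-1}}{r(t-s)}\vert\nabla w\vert^{p_1-1}\,\dx
+C_2 C_\ast^{p_1^+-1}\Bigl[\tfrac{m_t}{r(t-s)}\Bigr]^{p_1^+}\vert\Omega_k\vert,
\]
where $\Omega_k=\{z_k>0\}$. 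Using $m_t>1$, $C_\ast>1$, Young's inequality in the first right-hand term with a small parameter, then $\vert\nabla z_k\vert^{p_1}\leq 2^{p_1-1}(\vert\nabla w\vert^{p_1}+m_t^{p_1}\vert\xi'\vert^{p_1})$ and $\vert\nabla z_k\vert^{p_1^-}\leq 1+\vert\nabla z_k\vert^{p_1}$, I absorb the gradient terms and pass to a lower-order exponent $p_1^-$; this yields the analogue of~\eqref{14} without the boundary integral.

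Next I apply the Sobolev embedding~\eqref{13} on $\mathcal{U}$ (no trace is needed here) for some $\alpha\in(p_1^-,\gamma)$ with $\gamma\in(p_1^-,np_1^-/(n-p_1^-)]$, and combine it with Hölder's inequality as in~\eqref{40} to obtain the one-sided version of~\eqref{15}, with $\vert\partial\Omega_k\cap\partial\Omega\vert$ absent and the exponent $q_2^-$ dropped in favour of $p_2^-$. Introducing the extra parameter $\varepsilon\in(0,1)$ and applying the weighted Young inequality $a^\varepsilon b^{1-\varepsilon}\leq\varepsilon a+(1-\varepsilon)b$ exactly as before yields the analogue of~\eqref{17}, with the power $\min\{p_2^-,q_2^-\}-\max\{p_1^+,q_1^+\}+1-(\min\{p_2^-,q_2^-\}-p_1^-+1)\varepsilon$ replaced by $p_2^- - p_1^+ + 1-(p_2^- - p_1^- +1)\varepsilon$, which is positive for small $\varepsilon$ thanks to assumption~\eqref{50}.

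Finally I integrate in $k$ over $[0,\mathcal{K})$, use
$\tfrac{d}{dk}\!\int_{\Omega_k}z_k\,\dx=-\vert\Omega_k\vert$, bound $\int_{\Omega_0}z_0\,\dx\leq m_t\vert V_{\ell,r}\vert$, and invoke $\vert V_{\ell,r}\vert\leq Cr^{n-d}$ from~\eqref{51}. Choosing $C_\ast=r^{-\tau}$ so that the $r$-powers balance produces
\[
m_s\leq C_9^{\beta\sigma/p_1^+}\,\frac{m_t^{\theta}}{(t-s)^\sigma}, \qquad \theta<1,
\]
for suitable $\theta,\sigma$ depending only on $n,\mu,p_1,p_2,\mathcal{U}$; Lemma~\ref{18} then gives $m_{1/2}\leq C_{10}$, and undoing $u=C_\ast w$ delivers $\Vert\max\{u,0\}\Vert_{L^\infty(V_{\ell,r/2})}\leq C_{10}r^{-\tau}$ with
\[
\tau>\frac{p_1^+}{p_2^- - p_1^+ +1},
\]
as required. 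The only delicate point is verifying that the exponent of $C_\ast$ on the left of the analogue of~\eqref{17} is strictly positive when one uses only the interior hypothesis~\eqref{50}; this is precisely why the denominator in the lower bound for $\tau$ is $p_2^- - p_1^+ + 1$ instead of the mixed boundary/interior expression in Proposition~\ref{19}.
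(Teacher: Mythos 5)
Your proposal is correct and follows essentially the same route as the paper, which simply remarks that Proposition~\ref{26} is obtained ``proceeding in the same way as in Proposition~\ref{19}'': you rerun the level-set/De Giorgi iteration with the boundary integrals and the exponents $q_1,q_2$ dropped, use only the interior Sobolev embedding~\eqref{13} in place of~\eqref{39}, observe that the exponent of $C_\ast$ becomes $p_2^- - p_1^+ + 1 - (p_2^- - p_1^- + 1)\varepsilon$ (positive for small $\varepsilon$ by~\eqref{50}), and close the iteration with Lemma~\ref{18} to reach the stated lower bound $\tau>p_1^+/(p_2^- - p_1^+ +1)$.
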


\begin{theorem}\label{27} 
Suppose that the conditions \eqref{4} - \eqref{7}, \eqref{23}, \eqref{51} and \eqref{50}  are satisfied. Let $u \in W^{1, p_1 (\cdot)} _{\loc} \left(\bar{\Omega}\backslash \Gamma \right)$ $\cap L^{\infty} _{\loc} \left(\bar{\Omega}\backslash \Gamma \right)$ be a solution of equation \eqref{1} in $\bar{\Omega} \backslash \Gamma$, with $b\equiv h \equiv 0$. Then, in $\left\{x\in \Omega \:\vert    \: 0<\dist ( x , \Gamma ) < r_0\right\}$, the following inequality  holds almost everywhere:
\begin{equation*}
\vert    u(x)\vert     \leq C \dist (x,\Gamma)^{-\tau},
\end{equation*}
where $C=C\left(n , \mu , p_1 ,  p_2 , \mathcal{U}\right)>0$ and $\tau=\tau \left(n , \mu , p_1 ,  p_2 , \mathcal{U}\right)$ $> \frac{p^+ _1}{p_2^-  - p^+ _1 +1 }$. 
\end{theorem}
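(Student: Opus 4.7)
The plan is to deduce Theorem \ref{27} from two applications of Proposition \ref{26}, one to $u$ itself and one to $v := -u$, after which the pointwise bound \eqref{25} follows from a one-line choice of parameters. First, since $u$ solves \eqref{1} with $b \equiv h \equiv 0$, the weak formulation \eqref{2} collapses to the interior identity
\begin{equation*}
\int_\Omega \bigl\langle A(\cdot,u,\nabla u), \nabla\varphi\bigr\rangle + a(\cdot,u)\varphi + g(\cdot,u)\varphi \, \dx = 0
\end{equation*}
for all admissible $\varphi$. Restricting to $\varphi \geq 0$ gives the inequality \eqref{35}, so Proposition \ref{26} yields $\|\max\{u,0\}\|_{L^\infty(V_{\ell, r/2})} \leq C r^{-\tau}$ whenever $0 < r < \ell < r_0$.

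For the second application, I would substitute $u = -v$ and $\nabla u = -\nabla v$ into the identity and invoke the antisymmetry \eqref{23}, $A(x,-v,-\nabla v) = -A(x,-v,\nabla v)$, to rewrite it, after multiplying through by $-1$, as
\begin{equation*}
\int_\Omega \bigl\langle \tilde A(\cdot,v,\nabla v), \nabla \varphi\bigr\rangle + \tilde a(\cdot,v)\varphi + \tilde g(\cdot,v)\varphi\, \dx = 0,
\end{equation*}
where $\tilde A(x,v,\eta) := A(x,-v,\eta)$, $\tilde a(x,v) := -a(x,-v)$, and $\tilde g(x,v) := -g(x,-v)$. A direct sign-tracking check (using $\operatorname{sign} v = -\operatorname{sign}(-v)$ for the monotonicity \eqref{7} on $\tilde g$) shows that $(\tilde A, \tilde a, \tilde g)$ satisfies \eqref{4}--\eqref{7} with the same constant $\mu$ and the same exponents $p_1, p_2$. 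Hence Proposition \ref{26} also applies to $v$, giving $\|\max\{-u,0\}\|_{L^\infty(V_{\ell, r/2})} \leq C r^{-\tau}$, and combining both estimates yields $\|u\|_{L^\infty(V_{\ell, r/2})} \leq C r^{-\tau}$.

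To conclude, for any $x \in \Omega$ with $0 < \dist(x,\Gamma) < r_0$, I would set $\ell := \dist(x,\Gamma)$ and $r := \ell/2$; then $|\dist(x,\Gamma) - \ell| = 0 < r/2$, so $x \in V_{\ell, r/2}$ and the preceding bound gives $|u(x)| \leq C(\ell/2)^{-\tau}$, which is \eqref{25} after absorbing $2^\tau$ into the constant. The only non-mechanical step is the second one: without \eqref{23} the redefined $\tilde A$ would fail the coercivity \eqref{4}, so antisymmetry is essential precisely where one flips $A(x,-v,-\nabla v)$ to $-A(x,-v,\nabla v)$. The rest is a verbatim transcription of how Theorem \ref{21} was obtained from Proposition \ref{19}.
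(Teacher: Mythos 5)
Your proof is correct and follows the route the paper clearly intends: the paper introduces Proposition~\ref{26} and Theorem~\ref{27} together with the remark that they follow ``proceeding in the same way as in Proposition~\ref{19} and Theorem~\ref{21},'' and your argument is exactly that transference --- apply Proposition~\ref{26} once to $u$ and once to $v=-u$ (after repackaging $A,a,g$ into $\tilde A,\tilde a,\tilde g$, which you correctly verify satisfy \eqref{4}--\eqref{7}), then take $\ell=\dist(x,\Gamma)$, $r=\ell/2$ to land on the pointwise bound. One small clarification of your closing remark: with your choice $\tilde A(x,v,\eta):=A(x,-v,\eta)$, coercivity \eqref{4} and growth \eqref{5} for $\tilde A$ hold automatically whether or not \eqref{23} is assumed, since those conditions see only $|u|$; the place where \eqref{23} is actually used is the earlier flip $A(x,-v,-\nabla v)=-A(x,-v,\nabla v)$, which is what allows the equation for $v$ to be expressed with $\nabla v$ (rather than $-\nabla v$) in the third argument so that Proposition~\ref{26} can be applied in the form stated.
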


\section{The removability of singular set}

Next, we prove the main theorem of this paper. Before, we start with the following 

\begin{lemma} \label{29}
Suppose that the conditions  \eqref{4} - \eqref{9}, \eqref{24} - \eqref{51} are satisfied.  If $u \in W^{1, p_1(\cdot)} _{\loc} \left(\bar{\Omega}\backslash \Gamma \right)$ $\cap L^{\infty} _{\loc} \left(\bar{\Omega}\backslash \Gamma \right)$ satisfies \eqref{22}, then
$$
\max \{u , 0\} \in L^{\infty} (\Omega).
$$

\end{lemma}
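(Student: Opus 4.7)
The proof will be a Caccioppoli-type argument, testing \eqref{22} with a truncation of the form $(u-k)^+$ multiplied by a cutoff vanishing near $\Gamma$, and using the pointwise decay of Theorem~\ref{21} to control the layer near $\Gamma$. The coercive terms $g$ and $h$, which by \eqref{7}, \eqref{9} grow like $u^{p_2}$ and $u^{q_2}$ respectively, will force $\{u>k\}$ to have measure zero for $k$ sufficiently large.

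For $\epsilon\in(0,r_0)$ small, let $\eta_\epsilon\in C^\infty(\mathbb{R}^n)$ be a cutoff depending only on $\dist(\cdot,\Gamma)$ with $\eta_\epsilon\equiv0$ on $\{\dist\le\epsilon\}$, $\eta_\epsilon\equiv1$ on $\{\dist\ge2\epsilon\}$, $0\le\eta_\epsilon\le1$, and $|\nabla\eta_\epsilon|\le C/\epsilon$. Fix $s\in(0,1]$ and $k\ge1$ (to be chosen large) and plug
$$\varphi_\epsilon := \bigl[(u-k)^+\bigr]^s \eta_\epsilon^{p_1^+}$$
into \eqref{22}. This is admissible because $\varphi_\epsilon\ge0$, $\spt\varphi_\epsilon\subset\bar\Omega\setminus\Gamma$, and on $\spt\eta_\epsilon$ the function $u$ lies in $W^{1,p_1(\cdot)}_{\loc}\cap L^\infty_{\loc}$. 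Expanding the LHS of \eqref{22}, using the coercivity \eqref{4}, the sign conditions \eqref{7}, \eqref{9} to obtain the coercive lower bounds, and exploiting \eqref{24} (which yields $\min\{p_2^-,q_2^-\}>\max\{p_1^+,q_1^+\}-1$) to absorb the lower-order terms $a(\cdot,u)\varphi_\epsilon$ and $b(\cdot,u)\varphi_\epsilon$ controlled by \eqref{6}, \eqref{8} into the coercive ones for $k$ large, one arrives at an inequality of the form
\begin{align*}
&\int_\Omega \eta_\epsilon^{p_1^+} u^{p_2^-}\bigl[(u-k)^+\bigr]^s\dx + \int_{\partial\Omega}\eta_\epsilon^{p_1^+} u^{q_2^-}\bigl[(u-k)^+\bigr]^s\ds \\
&\le C\int_\Omega \bigl[(u-k)^+\bigr]^s \bigl|A(\cdot,u,\nabla u)\bigr|\, \eta_\epsilon^{p_1^+-1}|\nabla\eta_\epsilon|\dx.
\end{align*}
Applying \eqref{5} to $|A|$ and Young's inequality to the $|\nabla u|^{p_1-1}$ contribution absorbs the gradient piece into the coercive term $\int\eta_\epsilon^{p_1^+}|\nabla u|^{p_1}\chi_{\{u>k\}}$ already appearing on the left, leaving a residual supported in the annulus $A_\epsilon:=\{\epsilon<\dist(\cdot,\Gamma)<2\epsilon\}$, bounded by $C\int_{A_\epsilon}\bigl[(u-k)^+\bigr]^{s+p_1^+-1}|\nabla\eta_\epsilon|^{p_1^+}\dx$ plus similar lower-order geometric residuals.

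Now invoke the pointwise bound $|u(x)|\le C\,\dist(x,\Gamma)^{-\tau}$ of Theorem~\ref{21}, with $\tau$ chosen arbitrarily close to $\tau_0:=\frac{\max\{p_1^+,q_1^+\}}{\min\{p_2^-,q_2^-\}-\max\{p_1^+,q_1^+\}+1}$. Combined with $|\nabla\eta_\epsilon|\le C/\epsilon$ and the measure estimates $|A_\epsilon|\le C\epsilon^{n-d}$, $|\partial\Omega\cap A_\epsilon|\le C\epsilon^{n-d-1}$ from \eqref{51}, the residual is at most $C\epsilon^{\alpha}$ with $\alpha=n-d-p_1^+-\tau(s+p_1^+-1)$. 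Letting $\tau\to\tau_0$ and $s\to0^+$, the condition $\alpha>0$ reduces by a direct algebraic computation to $\frac{p_1^+\min\{p_2^-,q_2^-\}}{\min\{p_2^-,q_2^-\}-\max\{p_1^+,q_1^+\}+1}<n-d$, which is the worst-case pointwise form of \eqref{30}. Hence the residual vanishes as $\epsilon\to0$, and Fatou's lemma yields
$$\int_\Omega u^{p_2^-}\bigl[(u-k)^+\bigr]^s\dx + \int_{\partial\Omega}u^{q_2^-}\bigl[(u-k)^+\bigr]^s\ds \le 0.$$
Since the integrands are non-negative, $(u-k)^+=0$ a.e. in $\Omega$, i.e., $\max\{u,0\}\le k$ a.e., establishing $\max\{u,0\}\in L^\infty(\Omega)$.

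\textbf{Main obstacle.} The crux is the exponent balance $\alpha>0$: matching the pointwise blow-up rate $\tau$ of Theorem~\ref{21} against the measure decay $\epsilon^{n-d}$ of the tubular neighborhood $A_\epsilon$ from \eqref{51}, precisely exploiting \eqref{30} and the freedom in the choice of $\tau$ and of $s$. The interplay between the interior exponents $p_1,p_2$ appearing in \eqref{30} and the boundary exponents $q_1,q_2$ appearing in $\tau_0$ must be handled with care, and the use of $s<1$ (which leads to $[(u-k)^+]^{s-1}$ in the gradient computation) is most safely justified by a smooth approximation of $t\mapsto t^s$.
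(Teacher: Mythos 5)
Your Caccioppoli argument is well structured and would work under a stronger hypothesis, but the exponent balance at the end does not reduce to \eqref{30}, and this is a genuine gap.

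Concretely, your residual is of order $\epsilon^{n-d-p_1^+-\tau(s+p_1^+-1)}$, and letting $s\to0^+$, $\tau\to\tau_0^+$ with $\tau_0 := \frac{\max\{p_1^+,q_1^+\}}{\min\{p_2^-,q_2^-\}-\max\{p_1^+,q_1^+\}+1}$, the requirement $\alpha>0$ becomes
$$
n-d \;>\; p_1^+ + (p_1^+-1)\tau_0 \;=\; \frac{p_1^+\min\{p_2^-,q_2^-\}+p_1^+-\max\{p_1^+,q_1^+\}}{\min\{p_2^-,q_2^-\}-\max\{p_1^+,q_1^+\}+1}.
$$
(Your displayed formula drops the $p_1^+-\max\{p_1^+,q_1^+\}$ in the numerator; it is correct only when $q_1^+\le p_1^+$.) This is not ``the worst-case pointwise form of \eqref{30}'': first, the boundary exponents $q_1,q_2$ do not appear in \eqref{30} at all, so if $q_1^+>p_1^+$ or $q_2^-<p_2^-$ your condition is strictly stronger than anything \eqref{30} could give; second, even when boundary and interior exponents coincide, the quantity $\esup_\Omega\frac{p_1p_2}{p_2-p_1+1}$ is generally \emph{strictly less} than $\frac{p_1^+p_2^-}{p_2^--p_1^++1}$ (the ratio is increasing in $p_1$ and decreasing in $p_2$, and the extremes need not occur at the same point), so the inequality you need is not implied by the one assumed. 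The root cause is that testing with a power $[(u-k)^+]^s$ forces the blow-up rate $\tau$ of Theorem~\ref{21} to enter \emph{polynomially} in the residual; since $\tau_0$ carries $q_1,q_2$ and the ``combined extremes'' ratio, it cannot be matched against \eqref{30}.

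The paper sidesteps this entirely by testing with $\varphi = \bigl(\ln\max\{u/\Lambda(\delta),1\}\bigr)\,\psi_r^\gamma$, where $\psi_r$ is a \emph{logarithmic} cutoff equal to $\tfrac{2}{\ln(1/r)}\ln(t/r)$ on $[r,\sqrt r]$, and $\gamma:=\esup_\Omega\frac{p_1p_2}{p_2-p_1+1}$. With this choice, Theorem~\ref{21} is only used to bound $\ln(u/\Lambda(\delta))\lesssim\ln(1/\dist(\cdot,\Gamma))$, i.e.\ logarithmically, and the dominant power of $1/\dist$ in the residual arises purely from the Young-inequality balance of the cutoff derivative, giving exactly the pointwise exponent $\frac{p_1(x)p_2(x)}{p_2(x)-p_1(x)+1}\le\gamma$. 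The resulting one-dimensional integral $\int_r^{\sqrt r} t^{-\gamma}\,t^{n-d-1}\,dt$ then vanishes as $r\to0^+$ precisely under $\gamma<n-d$, which is \eqref{30}. If you want to keep a truncation-type argument, you would have to replace $(u-k)^+$ by something growing logarithmically in $u$ as well; as written, the proposal proves the lemma only under the strictly stronger (and $q$-dependent) condition above.
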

\begin{proof}
We proceed by contradiction. For $r\in \left(0,r_0 ^2\right)$, we denote
$$
\Lambda(r)=\esup \left\{\max\{u(x),0\}\:\vert    \: r \leq \dist (x,\Gamma) \leq r_0^2 , \ x\in \Omega\right\}.
$$
We have  $\lim _{r \rightarrow 0^+} \Lambda (r)=\infty$. For sufficiently small values $r$ we define the function $\psi_{r}:\mathbb{R}\rightarrow \mathbb{R}$ as follows:
$$
\psi_{r}(t) = \left\{
\begin{aligned}
 & 0 & & \text { if } t<r, \\
 & 1 & & \text { if  } t >\sqrt{r}, \\
& \frac{2}{\ln \frac{1}{r}} \ln \frac{t}{r} & & \text { if } r\leq t \leq \sqrt{r} .
\end{aligned}
\right.
$$
Choosing $\delta >0$ such that $\Lambda (\delta)>1$, set
$$
\varphi = \left( \ln \  \max\left\{ \frac{u}{\Lambda  (\delta)}  , 1   \right\} \right) \psi _r ^{\gamma} \circ \dist (\cdot,\Gamma),
$$
where $\gamma=\esup _{\Omega}\frac{p_1 p_2}{p_2 - p_1 +1}$. We have  $\varphi \in W^{1, p_1(\cdot)} _{\loc} \left(\bar{\Omega}\backslash \Gamma \right)\cap L^\infty _{\loc} \left(\bar{\Omega}\backslash \Gamma \right)$, with  $\spt \varphi \subset \bar{\Omega}\cap \{ \dist (\cdot , \Gamma) \geq r\} \subset \bar{\Omega} \backslash \Gamma$. For simplicity we write $\psi _r  \circ \dist (\cdot,\Gamma) = \psi _r $ and $\psi _r ^\prime \circ \dist (\cdot,\Gamma) = \psi _r ^\prime$. 

Denote $\Omega _{\delta} = \left\{x\in \Omega \:\vert    \: u(x)> \Lambda (\delta) \right\}$.  Substituting $\varphi$ into \eqref{22}, we obtain

\begin{equation*}
\begin{split}
&	\int_{\Omega _{\delta} }  \left[\frac{\psi _r ^{\gamma} }{u}\left\langle A(\cdot, u, \nabla u) , \nabla u  \right\rangle \right. + \gamma \psi _r ^\prime \psi _r ^{\gamma -1}   \left( \ln \frac{u}{\Lambda  (\delta)} \right)  \left\langle A(\cdot , u , \nabla u) , \nabla \dist (\cdot , \Gamma )  \right\rangle   \\
&	\left.+ a( \cdot , u ) \psi _r ^{\gamma}   \left( \ln \frac{u}{\Lambda  (\delta)} \right)  + g(\cdot , u)  \psi _r ^{\gamma}   \left( \ln \frac{u}{\Lambda  (\delta)} \right)  \right] \dx  \\
&	+ \int _{\partial \Omega _{\delta} \cap \partial \Omega} \left[ b(\cdot,u) \psi _r ^{\gamma}   \left( \ln \frac{u}{\Lambda  (\delta)} \right)  + h(\cdot,u)  \psi _r ^{\gamma}   \left( \ln \frac{u}{\Lambda  (\delta)} \right)  \right] \ds \leq 0.
\end{split}
\end{equation*}
Since $u> \Lambda (\delta)>1$ in $\Omega _{\delta}$, and by virtue of the conditions \eqref{4} - \eqref{9}, we have
\begin{equation*}
\begin{split}
&	\int_{\Omega _{\delta} }  \mu \frac{\psi _r ^{\gamma} }{u} \vert    \nabla u\vert     ^{p_1} \dx +  \int _{\Omega _{\delta}} \left(\mu -  \frac{3\mu ^{-1}}{\Lambda (\delta) ^{p^- _2 - p^+ _1 +1}} \right) \psi _r ^{\gamma}   \left( \ln \frac{u}{\Lambda  (\delta)} \right)  u^{p_2} \dx\\
&	+  \int _{\partial \Omega _{\delta} \cap \Omega } \left(\mu  - \frac{3\mu ^{-1}}{\Lambda (\delta) ^{q^- _2 - q^+ _1 +1}}\right) \psi _r ^{\gamma}  \left( \ln \frac{u}{\Lambda  (\delta)} \right)  u^{q _2} \ds \\
&	\leq \int_{\Omega _{\delta} } \mu ^{-1}\gamma \psi _r ^\prime \psi _r ^{\gamma -1}   \left( \ln \frac{u}{\Lambda  (\delta)} \right) \left(  \vert    \nabla u\vert    ^{p_1 - 1} + u ^{p_1 - 1} + 1\right)\dx\\
&	\leq   \int_{\Omega _{\delta} } \mu ^{-1} \gamma \psi _r ^{\gamma}   u^{-1} \left\{ C_1 \left(\varepsilon _1 , p_1 \right) \left[ \psi _r ^\prime \psi _r ^{ -1}  u \left( \ln \frac{u}{\Lambda  (\delta)} \right)  \right] ^{p_1} + \varepsilon _1  \vert    \nabla u\vert    ^{p_1} \right\}\dx\\
&	+ \int_{\Omega _{\delta} } \mu ^{-1}\gamma  \left( \ln \frac{u}{\Lambda  (\delta)} \right) \left[ \left(C_2 \left(\varepsilon _2 , p_1 , p_2 \right) + 1\right)(\psi _r ^{\prime})^{\frac{p_2}{p_2 - p_1 + 1}}  + \varepsilon _2\psi _r ^{\frac{(\gamma -1)p_2}{p_1 -1}} u^{p_2} + \psi _r ^{\frac{(\gamma -1)p_2}{p_1 -1}} \right]\dx.
\end{split}
\end{equation*}
We can assume that 
$$
\varepsilon _3 = \mu  - \frac{\mu ^{-1} (3+\gamma)}{\Lambda (\delta) ^{\min \{p^- _2 , q_2 ^-\} - \max\{p^+ _1 , q^+ _1\} +1}}   >0,
$$
because $\lim _{r\rightarrow 0 ^+}\Lambda (r) = \infty$. Take $\varepsilon _1 = \frac{\mu^2}{2 \gamma}$, $\varepsilon _2 =\frac{\mu \varepsilon _3}{2\gamma}$. Since $\frac{(\gamma -1)p_2}{p_1 -1} \geq \gamma$ and $\psi _r \leq 1$,
\begin{equation}\label{28}
\begin{split}
&   \int_{\Omega _{\delta} }  \frac{\psi _r ^{\gamma} }{u} \vert    \nabla u\vert     ^{p_1} \dx +  \int _{\Omega _{\delta}} \psi _r ^{\gamma}   \left( \ln \frac{u}{\Lambda  (\delta)} \right)  u^{p_2} \dx +  \int _{\partial \Omega _{\delta} \cap \Omega }  \psi _r ^{\gamma}  \left( \ln \frac{u}{\Lambda  (\delta)} \right)  u^{q _2} \ds \\
&   \leq  C_3  \int_{\Omega _{\delta} }  \left(\psi _r ^\prime \right) ^{p_1}\psi _r ^{\gamma - p_1}  u ^{p_1 -1} \left( \ln \frac{u}{\Lambda  (\delta)} \right) ^{p_1} +  \left( \ln \frac{u}{\Lambda  (\delta)} \right) (\psi _r ^{\prime})^{\frac{p_2}{p_2 - p_1 + 1}} \dx,
\end{split}
\end{equation}
where $C_3 = C_3 \left(\mu , \gamma , \varepsilon _3 , p_1 , p_2\right)>0$.

Additionally, let us consider
$$
\begin{aligned}
&   \left(\psi _r ^\prime \right)  ^{p_1}\psi _r ^{\gamma - p_1}  u ^{p_1 -1} \left( \ln \frac{u}{\Lambda  (\delta)} \right) ^{p_1} \\
&   \leq  \left( \ln \frac{u}{\Lambda  (\delta)} \right) \left\{ C_4 \left(\varepsilon _4 , p_1 , p_2 \right) \left[ \left(\psi _r ^\prime \right) ^{p_1}  \left( \ln \frac{u}{\Lambda  (\delta)} \right) ^{p_1 -1} \right]^{\frac{p_2}{p_2 - p_1 +1}} + \varepsilon _4 \psi _r ^{\frac{(\gamma - p_1)p_2}{p_1 -1}}  u ^{p_2} \right\},
\end{aligned}
$$ 
$$
\frac{(\gamma - p_1)p_2}{p_1 -1} \geq \gamma \quad \text { and } \quad  \psi _r ^\prime (t) = \frac{2}{t\ln \frac{1}{r}} >1 \text { for } t\in \left(r,\sqrt{r}\right).
$$ 
Choose $\varepsilon _4 =\frac{1}{2 C_3}$. By \eqref{28},
\begin{equation*}
\begin{split}
&    \int_{\Omega _{\delta} }   \frac{\psi _r ^{\gamma} }{u} \vert    \nabla u\vert     ^{p_1} \dx +  \int _{\Omega _{\delta}}  \psi _r ^{\gamma}   \left( \ln \frac{u}{\Lambda  (\delta)} \right)  u^{p_2} \dx +  \int _{\partial \Omega _{\delta} \cap \Omega }  \psi _r ^{\gamma}  \left( \ln \frac{u}{\Lambda  (\delta)} \right)  u^{q _2} \ds \\
&    \leq   C_5\left(\mu , \gamma , \varepsilon _3 , p_1 , p_2 \right)  \int_{\Omega _{\delta} \cap \{r\leq \dist ( \cdot ,\Gamma)\leq \sqrt{r}\}} \left( \ln \frac{u}{\Lambda  (\delta)} \right) 
 \left[  \left( \ln \frac{u}{\Lambda  (\delta)} \right) ^{\frac{(p_1 -1) p_2}{p_2 - p_1 +1}} + 1 \right] 
     \left(\frac{2}{\dist (\cdot , \Gamma) \ln \frac{1}{r}}\right) ^{\frac{p_1 p_2}{p_2 - p_1 +1}}\dx.
\end{split}
\end{equation*}
We can assume that $1<\ln \frac{1}{\dist ( x , \Gamma)}$ if $0<\dist ( x , \Gamma ) \leq \sqrt{r}$. Using \eqref{25}, we see
\begin{equation*}
\begin{split}
& \int_{\Omega _{\delta} }    \frac{\psi _r ^{\gamma} }{u} \vert    \nabla u\vert     ^{p_1} \dx +  \int _{\Omega _{\delta}}  \psi _r ^{\gamma}   \left( \ln \frac{u}{\Lambda  (\delta)} \right)  u^{p_2} \dx  \\
& \leq  C_6 \left(\ln \frac{1}{r} \right) ^{-\frac{p_1 ^- p_2 ^-}{p_2 ^+ - p_1 ^- +1}}\int_{\Omega _{\delta} \cap \{r\leq \dist ( \cdot ,\Gamma)\leq \sqrt{r}\}} \left( \ln \frac{1}{ \dist (\cdot , \Gamma) } \right) ^{1+\frac{(p_1 -1) p_2}{p_2 - p_1 +1}}
 \left(\frac{1}{\dist (\cdot , \Gamma)}\right) ^{\frac{p_1 p_2}{p_2 - p_1 +1}}\dx\\
& \leq  C_7 \left(\ln \frac{1}{r} \right) ^{-\frac{p_1 ^- p_2 ^-}{p_2 ^+ - p_1 ^- +1}}\int_r ^{\sqrt{r}} \left( \ln \frac{1}{r} \right) ^{1+\frac{(p_1 ^+ -1) p_2 ^+}{p_2 ^- - p_1 ^+ +1}} \left(\frac{1}{t}\right) ^{\gamma} t^{n-d -1}\textnormal{d}t\\
& =  C_7 \left(\ln \frac{1}{r} \right) ^{-\frac{p_1 ^- p_2 ^-}{p_2 ^+ - p_1 ^- +1}}\left( \ln \frac{1}{r} \right) ^{1+\frac{(p_1 ^+ -1) p_2 ^+}{p_2 ^- - p_1 ^+ +1}}  \frac{r^{\frac{n-d -\gamma}{2}}}{n-d-\gamma} \left(1-r^{\frac{n-d -\gamma}{2}} \right),
\end{split}
\end{equation*}
where $C_i=C_i\left(n , \tau , \mu , \gamma , \varepsilon _3 , p_1 ,  p_2  ,  q _1 ,  q_2 ,  \mathcal{U}\right)>0$, $i=6 ,7$. Since $\lim _{r\rightarrow 0^+} 1/r $ $= \infty$, we can assume that $\left( \ln 1/r \right) ^{1+\frac{(p_1 ^+ -1) p_2 ^+}{p_2 ^- - p_1 ^+ +1}} \leq r^{-\frac{n-d -\gamma}{4}}$. Therefore, if $r\rightarrow 0^+$,
$$
\int_{\Omega _{\delta} }   \frac{\vert    \nabla u\vert     ^{p_1}}{u} \dx+  \int _{\Omega _{\delta}}    \left( \ln \frac{u}{\Lambda  (\delta)} \right)  u^{p_2} \dx = 0.
$$
Hence, $u(x)= \Lambda (\delta)$ almost every in $\Omega_{\delta}$. Thus, we have a contradiction, and proves the Lemma \ref{29}. 
\end{proof}

Now we are ready to prove the main Theorem \ref{34}.

\renewcommand*{\proofname}{\textnormal{\textbf{Proof of Theorem \ref{34}}}}

\begin{proof}
1. First we prove $u \in W^{1, p_1 (\cdot)}(\Omega) \cap L^{\infty}(\Omega)$. As consequence of a Lemma \ref{29}, $u \in L^{\infty}(\Omega)$. Next, for $r<2r_0 / 5 $, let $\psi_{r}: \mathbb{R}\rightarrow \mathbb{R}$ be a smooth function such that 
$$
\psi_{r}(t)=\left\{\begin{aligned} & 0 & & \text { if }  t< r / 2  \text { or } t> 5r / 2  , \\ 
& 1 & & \text { if }  r  < t < 2 r,\end{aligned}\right.
$$
$0 \leq \psi_{r} \leq 1$ and $\left\vert    \psi_{r}^{\prime}\right\vert     \leq C / r$, where $C$ is a suitable positive constant. Set
$$
\varphi=\left(\psi_{r}^{p_{1}^{+}} \circ \dist (\cdot,  \Gamma)\right) u.
$$
We have $\varphi\in W^{1 , p_1 (\cdot)}_{\loc} \left( \bar{\Omega}\backslash \Gamma\right) \cap L^\infty _{\loc}\left( \bar{\Omega}\backslash \Gamma\right)$, with  $\spt \varphi \subset \bar{\Omega}\cap \left\{ r / 2 \leq \dist (\cdot , \Gamma) \leq 5r / 2  \right\}$. For simplicity we write $\psi_{r}=\psi_{r} \circ \dist (\cdot,  \Gamma)$ and $\psi_{r}^{\prime}= \psi_{r}^{\prime} \circ \dist (\cdot ,  \Gamma)$. Substituting $\varphi$ into \eqref{2},
$$
\begin{aligned}
&\int_{\Omega} \left\langle A(\cdot, u, \nabla u), p_{1}^{+} \psi_{r}^{p_{1}^{+}-1} \psi_{r}^{\prime} u \nabla \dist (\cdot , \Gamma) +\psi_{r}^{p_{1}^{+}} \nabla u \right\rangle  \\
& + a(\cdot , u) \psi_{r}^{p_{1}^{+}} u +g(\cdot, u) \psi_{r}^{p_{1}^{+}} u \dx +\int_{\partial \Omega} b(\cdot, u) \psi_{1}^{p_{1}^{+}} u+h(\cdot, u) \psi_{1}^{p_{1}^{+}} u\ds\leq 0
\end{aligned}
$$
By conditions \eqref{4} - \eqref{9},  we have 
$$
\begin{aligned}
&\int_{\Omega}  \mu \vert    \nabla u\vert    ^{p_{1}}  \psi_{r}^{p_{1}^{+}} \dx + \int_{\Omega} \mu \psi_{r}^{p_{1}^{+}}\vert    u\vert    ^{p_{2}+1} \dx +\int_{\partial \Omega} \mu \psi_{r}^{p_{1}^{+}}\vert    u\vert    ^{q_{2}+1} \ds\\
&\leq  \int_{\Omega} \mu^{-1} p_{1}^{+} \psi_{r}^{p_{1}^{+}-1}\left\vert    \psi_{r}^{\prime}\right\vert    \vert    u\vert    \left(\vert    \nabla u\vert    ^{p_{1}-1}+\vert    u\vert    ^{p_{1}-1}+1\right) \dx\\
&+\int_{\Omega} \mu^{-1}\left(\vert    u\vert    ^{p_{1}-1}+1\right) \psi_{r}^{p_{1}^{+}} \vert    u\vert     + \mu^{-1} \psi_{r}^{p_{1}^{+}}\vert    u\vert     \dx\\
&+\int_{\partial \Omega} \mu^{-1}\left(\vert    u\vert    ^{q_{1}-1}+1\right) \psi_{r}^{p_{1}^{+}} \vert    u\vert    +\mu^{-1} \psi_{r}^{p_{1}^{+}}\vert    u\vert     \ds.
\end{aligned}
$$
Since $\vert     u \vert    ^{p_{1}} \leq \max\left\{\Vert      u\Vert      _{L^\infty (\Omega)}^{p_{1}^{+}},\Vert      u\Vert      _{L^{\infty} (\Omega)}^{p_{1} ^-}\right\}$ and $\vert     u \vert    ^{q_{1}} \leq \max\left\{\Vert      u\Vert      _{L^\infty (\partial \Omega)}^{q_{1}^{+}},\Vert      u\Vert      _{L^{\infty} (\partial \Omega)}^{q_{1} ^-}\right\}$,
$$
\begin{aligned}
&	\int_{\Omega}\vert    \nabla u\vert    ^{p_{1}} \psi_{r}^{p_{1}^{+}} \dx \\
&	\leq  \mu^{-2} p_{1}^{+} \int_{\Omega} C_{1}\left(\varepsilon _1  , p_1 \right)\left[\left\vert    \psi _r ^\prime \right\vert    \vert    u\vert     \psi_{r}^{p_{1}^{+}-1-\frac{p^{+} _1(p_1-1)}{p_{1}}}\right]^{p_{1}} 
	+\varepsilon_{1}\left[\vert    \nabla u\vert    ^{p_{1}-1} \psi_{r} ^{\frac{p_{1}^{+}(p_1-1)}{p_{1}}}\right]^{\frac{p_{1}}{p_{1}-1}} \dx 	+C_{2} r^{n-d-1},
\end{aligned}
$$
where $C_{2}$ is a positive constant independent of $r$. Choosing $\varepsilon_{1}=\frac{\mu^{2}}{2 p_{1}^{+}}$,
$$
\int_{\Omega \cap \left\{ r \leq \dist ( \cdot , \Gamma) \leq 2 r\right\}} \vert    \nabla u\vert    ^{p_{1}} \dx \leq C_{3} r^{n-d-p_{1}^{+}},
$$
where $C_3$ is a positive constant independent of $r$. Therefore,
$$
\begin{aligned}
&	\int  _{\Omega \cap \left\{  \dist ( \cdot , \Gamma) \leq 2 r\right\}} \vert    \nabla u\vert    ^{p_{1}} \dx  \\
&	= \sum_{i=0}^{\infty} \int_{\Omega \cap \left\{2^{-i} r \leq \dist ( \cdot , \Gamma) \leq 2^{-i +1} r\right\}}  \vert    \nabla u\vert    ^{p_{1}} \dx \leq C_3 \sum _{i=0}^{\infty} \left(\frac{r}{2^i}\right)^{n-d-p_1 ^+}<\infty.
\end{aligned}
$$
So $\vert    \nabla u\vert    \in L^{p_1(\cdot)} (\Omega)$, and thus we have proved that $u\in W^{1,p_1(\cdot)} (\Omega) \cap L^\infty (\Omega)$.\\

2. Now, we will show that $u$ is a solution of equation \eqref {1} in the domain $\Omega$. For $r\in (0,r_0)$, let $\xi _r:\mathbb{R} \rightarrow \mathbb{R}$ be a smooth function such that
$$
\xi _r (t)= \left\{\begin{aligned}
& 1  & & \text { if } \vert    t\vert     \leq r ,\\ 
& 0 & & \text { if } 2 r \leq \vert    t\vert    ,
\end{aligned}
\right.
$$
$0 \leq \xi \leq 1$ and $\left\vert    \xi_{r}^{\prime}\right\vert     \leq C / r$, where $C$ is a suitable  positive constant. For simplicity we write $\xi _{r}=\xi _{r} \circ \dist (\cdot,  \Gamma)$ and $\xi _{r}^{\prime}= \xi _{r}^{\prime} \circ \dist (\cdot ,  \Gamma)$. Let $\varphi \in W^{1, p_1 (\cdot)}(\Omega) \cap L^{\infty}(\Omega)$. We have  $\left(1-\xi_{r}\circ \dist (\cdot ,  \Gamma) \right) \varphi \in W_{\loc}^{1, p_1(\cdot)}\left(\bar{\Omega} \backslash \Gamma \right) \cap L_{\loc}^{\infty}\left(\bar{\Omega} \backslash \Gamma \right)$, with  $\spt \varphi \subset \bar{\Omega} \backslash \Gamma$. Then, \eqref {2} yields
\begin{equation} \label{40}
\begin{split}
&	\int_{\Omega}  \left\langle A(\cdot , u , \nabla u) ,\left(1-\xi_{r}\right) \nabla \varphi - \varphi  \xi_{r} ^\prime \nabla \dist (\cdot , \Gamma)  \right\rangle + a( \cdot , u ) \left(1-\xi_{r}\right) \varphi 	+  g(\cdot, u) \left(1-\xi_{r}\right) \varphi  \dx \\
&	+ \int _{\partial \Omega } b(\cdot , u) \left(1-\xi_{r}\right) \varphi + h(\cdot , u) \left(1-\xi_{r}\right) \varphi \ds = 0.
\end{split}
\end{equation}
When $r\rightarrow 0^+$, for all $\varphi \in W^{1, p_1 (\cdot)}(\Omega) \cap L^{\infty}(\Omega)$, the equality \eqref{40} implies \eqref{2}. Indeed, we have 
\begin{equation*} 
\begin{split}
&	\lim _{r \rightarrow 0}  \int_{\Omega}  \left\langle A(\cdot , u , \nabla u) ,\left(1-\xi_{r}\right) \nabla \varphi  \right\rangle +  a( \cdot , u ) \left(1-\xi_{r}\right) \varphi  +  g(\cdot, u) \left(1-\xi_{r}\right) \varphi  \dx \\
&	+  \int _{\partial \Omega } b(\cdot , u) \left(1-\xi_{r}\right) \varphi + h(\cdot , u) \left(1-\xi_{r}\right) \varphi \ds \\
&	=\int_{\Omega} \left\langle A(\cdot , u , \nabla u) , \nabla \varphi  \right\rangle  +  a( \cdot , u ) \varphi  +  g(\cdot, u) \varphi  \dx  + \int _{\partial \Omega } b(\cdot , u)  \varphi + h(\cdot , u)  \varphi \ds .
\end{split}
\end{equation*}
Additionally, by \eqref{5}, Proposition \ref{31} and \ref{32}:
$$
\begin{aligned}
&	\left\vert    \int_{\Omega}    \left\langle A( \cdot , u, \nabla u) ,  \varphi  \xi_{r} ^\prime \nabla \dist (\cdot , \Gamma) \right\rangle \dx\right\vert     \\
&	\leq  \frac{C_4 }{r} \int_{\Omega \cap \left\{ r\leq \dist ( \cdot ,\Gamma ) \leq 2r\right\}} \left(\vert    \nabla u\vert    ^{p_1 -1} + \vert    u\vert    ^{p_1 -1} +1 \right)\varphi \dx\\
&	\leq  \frac{C_5}{r} \int_{\Omega \cap \left\{ r\leq \dist ( \cdot ,\Gamma ) \leq 2r\right\}} \vert    \nabla u\vert    ^{p_1 -1} + \vert    u\vert    ^{p_1 -1} +1 \dx\\
&	\leq  \frac{C_6}{r}\left\Vert      \vert    \nabla u\vert    ^{p_1 -1}\right\Vert      _{L^{ \frac{p_1}{p_1-1}}\left(\Omega \cap\left\{ r\leq \dist ( \cdot ,\Gamma ) \leq 2r\right\}\right)}\Vert      1\Vert      _{L^{p_1}\left(\Omega \cap \left\{ r\leq \dist ( \cdot ,\Gamma ) \leq 2r\right\}\right)}
	+C_6 r^{n-d-1}\\
&	\leq \frac{C_7}{r} \max \left\{\left(\int_{\Omega \cap \left\{ r\leq \dist ( \cdot ,\Gamma ) \leq 2r\right\}}\vert    \nabla u\vert    ^{p_1} \dx\right)^{ \left(\frac{p_1 -1 }{p_1} \right) ^+}  \right. 
	, \left.   \left(\int_{\Omega \cap \left\{ r\leq \dist ( \cdot ,\Gamma ) \leq 2r\right\}}\vert    \nabla u\vert    ^{p_1} \dx\right)^{\left( \frac{p_1 -1 }{p_1} \right) ^- }\right\}\\
&	\cdot \left\vert  \Omega \cap  \left\{ r\leq \dist ( \cdot ,\Gamma ) \leq 2r\right\} \right\vert    ^{\frac{1}{p_1^+}}+ C_6 r^{n-d-1}\\
&	\leq  C_8 r^{\frac{n -d - p^+ _1}{p_1 ^+}} \rightarrow 0 \quad \text { as }  r \rightarrow 0,
\end{aligned}
$$
where  $C_i$, $i=4, \ldots, 8$, are positive constants independents of $r$. So, we have obtained that equality \eqref{2} is fulfilled for all $\varphi \in W^{1, p_1(\cdot)}(\Omega) \cap L^{\infty}(\Omega)$. Therefore, the singular set $\Gamma$ is removable for solutions of equation \eqref{1}.

\end{proof}

Similarly as in the Lemma \ref{29} and in the Theorem \ref{34} we have the following results.

\begin{lemma} 
Suppose that the conditions \eqref{4} - \eqref{7}, \eqref{24} - \eqref{51}  and \eqref{50} are satisfied. If $u \in W^{1, p_1(\cdot)} _{\loc} \left(\bar{\Omega}\backslash \Gamma \right)$ $\cap L^{\infty} _{\loc} \left(\bar{\Omega}\backslash \Gamma \right)$ satisfies \eqref{35}, then
$$
\max \{u , 0\} \in L^{\infty} (\Omega).
$$
\end{lemma}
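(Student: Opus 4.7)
The plan is to mirror the proof of Lemma~\ref{29}, simplified to account for the absence of boundary nonlinearities and invoking Theorem~\ref{27} (instead of Theorem~\ref{21}) to supply the pointwise decay estimate of $u$ near $\Gamma$. I argue by contradiction. Define
\[
\Lambda(r) = \esup \left\{ \max\{u(x),0\} \:\vert\: r \leq \dist(x,\Gamma) \leq r_0^2,\ x\in\Omega \right\},
\]
and suppose $\lim_{r\to 0^+}\Lambda(r) = \infty$. Fix $\delta>0$ with $\Lambda(\delta)>1$ and introduce the same cut-off $\psi_r$ and the same test function
\[
\varphi = \left( \ln \max\{u/\Lambda(\delta),1\} \right) \psi_r^\gamma \circ \dist(\cdot,\Gamma), \qquad \gamma = \esup_\Omega \tfrac{p_1 p_2}{p_2 - p_1 + 1},
\]
used in Lemma~\ref{29}. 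As there, condition \eqref{30} ensures $\gamma$ is finite and $\varphi \in W^{1,p_1(\cdot)}_{\loc}(\bar\Omega\setminus\Gamma)\cap L^\infty_{\loc}(\bar\Omega\setminus\Gamma)$ is admissible and nonnegative.

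Next I substitute $\varphi$ into \eqref{35}. Since the surface integrals are simply absent, every interior manipulation from Lemma~\ref{29} carries over verbatim to the set $\Omega_\delta = \{u>\Lambda(\delta)\}$. Using \eqref{4}--\eqref{7}, applying Young's inequality with the same choices $\varepsilon_1 = \mu^2/(2\gamma)$, $\varepsilon_2 = \mu\varepsilon_3/(2\gamma)$, and invoking \eqref{50} in place of the combined interior/boundary condition (so that the coefficient $\varepsilon_3 = \mu - \mu^{-1}(3+\gamma)\Lambda(\delta)^{-(p_2^- - p_1^+ + 1)}$ is positive for small $\delta$), I obtain the interior analog of \eqref{28}, namely
\[
\int_{\Omega_\delta} \tfrac{\psi_r^\gamma}{u}|\nabla u|^{p_1}\dx + \int_{\Omega_\delta} \psi_r^\gamma \left(\ln \tfrac{u}{\Lambda(\delta)}\right) u^{p_2} \dx \leq C \int_{\Omega_\delta} (\psi_r')^{p_1} \psi_r^{\gamma - p_1} u^{p_1-1}\left(\ln\tfrac{u}{\Lambda(\delta)}\right)^{p_1} + \left(\ln\tfrac{u}{\Lambda(\delta)}\right)(\psi_r')^{p_2/(p_2-p_1+1)} \dx.
\]

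Then I apply Theorem~\ref{27} to bound $u \leq C\dist(\cdot,\Gamma)^{-\tau}$ on the annular support of $\psi_r'$, combine with $\psi_r'(t) = 2/(t\ln(1/r))$ on $[r,\sqrt{r}]$, and integrate in polar-type coordinates using \eqref{51}. As in Lemma~\ref{29}, the right-hand side is controlled by a product of a negative power of $\ln(1/r)$ and the factor $r^{(n-d-\gamma)/2}$, which tends to zero as $r\to 0^+$ thanks to \eqref{30}. Passing to the limit forces $|\nabla u|^{p_1}/u \equiv 0$ and $u^{p_2}\ln(u/\Lambda(\delta)) \equiv 0$ a.e.\ on $\Omega_\delta$, hence $u = \Lambda(\delta)$ a.e.\ on $\Omega_\delta$, contradicting $\Lambda(r) \to \infty$.

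The main technical obstacle is purely bookkeeping: checking that every Young-inequality absorption in Lemma~\ref{29} that mixed interior and boundary exponents reduces cleanly when only interior exponents are present, and that hypothesis \eqref{50} alone (with no companion boundary condition) is exactly what is needed to make $\varepsilon_3$ positive and to keep the coefficient of $u^{p_2}$ on the left-hand side strictly positive. Once this is verified, no new estimates are required beyond those already established in Lemma~\ref{29} and Theorem~\ref{27}.
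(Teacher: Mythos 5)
Your proposal is correct and coincides with the paper's intended argument: the paper simply states ``Similarly as in the Lemma~\ref{29} \dots'' and leaves the adaptation to the reader, which is exactly what you have carried out, dropping the boundary integrals, replacing the combined exponent condition by \eqref{50}, and invoking the decay estimate from Theorem~\ref{27} (or more precisely the one-sided bound from Proposition~\ref{26}, since only an upper bound on $u$ on $\Omega_\delta$ is needed and \eqref{23} is not among the hypotheses) in place of Theorem~\ref{21}.
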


\begin{theorem} 
Suppose that the conditions \eqref{4} - \eqref{7}, \eqref{23} -  \eqref{51}  and \eqref{50} are satisfied. Let $u \in W^{1, p_1(\cdot)} _{\loc} \left(\bar{\Omega}\backslash \Gamma \right)$ $\cap L^\infty _{\loc}\left(\bar{\Omega}\backslash \Gamma\right)$ be a solution of equation \eqref{1} in $\bar{\Omega} \backslash \Gamma$, with $b\equiv h \equiv 0$. Then, the singularity of $u$ at  $\Gamma$ is removable.
\end{theorem}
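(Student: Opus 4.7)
The plan is to follow the two-step structure of the proof of Theorem \ref{34} verbatim, but with all boundary integrals suppressed (since $b\equiv h\equiv 0$) and with the preceding Lemma and Theorem \ref{27} replacing Lemma \ref{29} and Theorem \ref{21} respectively. Since $u$ is a solution of \eqref{1} with $b\equiv h\equiv 0$, the weak form \eqref{2} reduces to \eqref{35} after testing against $\pm\varphi$ (here condition \eqref{23} is used, exactly as in the statement), so both $u$ and $-u$ satisfy the analogue of \eqref{35} and the preceding lemma applies to each, giving $u\in L^\infty(\Omega)$.

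The first main step will be to show $u\in W^{1,p_1(\cdot)}(\Omega)$. I would choose the same radial cutoff $\psi_r$ as in part 1 of the proof of Theorem \ref{34}, test \eqref{2} with $\varphi=(\psi_r^{p_1^+}\circ\dist(\cdot,\Gamma))\,u$, and use the coercivity/growth conditions \eqref{4}--\eqref{7} together with $u\in L^\infty(\Omega)$ to isolate $\int|\nabla u|^{p_1}\psi_r^{p_1^+}\,\dx$ on the left. Applying Young's inequality with $\varepsilon_1=\mu^2/(2p_1^+)$ and using \eqref{51} to control the volume of the annular shell $\{r\le\dist(\cdot,\Gamma)\le 2r\}$ by $C r^{n-d}$ yields
\begin{equation*}
\int_{\Omega\cap\{r\le\dist(\cdot,\Gamma)\le 2r\}}|\nabla u|^{p_1}\,\dx \le C_3 r^{n-d-p_1^+}.
\end{equation*}
A dyadic summation, which converges thanks to \eqref{30}, then gives $\int_{\Omega\cap\{\dist(\cdot,\Gamma)\le 2r\}}|\nabla u|^{p_1}\,\dx<\infty$, and combined with $|\nabla u|\in L^{p_1(\cdot)}_{\loc}(\bar\Omega\setminus\Gamma)$ this proves $u\in W^{1,p_1(\cdot)}(\Omega)$. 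Note the boundary integrals from testing are absent because $b=h=0$, so this step is actually strictly simpler than its counterpart in Theorem \ref{34}.

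The second main step is to pass to the limit in the test functions. Given arbitrary $\varphi\in W^{1,p_1(\cdot)}(\Omega)\cap L^\infty(\Omega)$, I substitute $(1-\xi_r\circ\dist(\cdot,\Gamma))\varphi$ into \eqref{2}; all boundary integrals with $b,h$ are zero, and the remaining volume integrals converge to the corresponding integrals with $\varphi$ by dominated convergence (here $u\in L^\infty(\Omega)$ and $u\in W^{1,p_1(\cdot)}(\Omega)$ supply the integrable dominants via \eqref{5}, \eqref{6}). The only delicate term is the extra piece involving $\xi_r'$,
\begin{equation*}
\int_\Omega \bigl\langle A(\cdot,u,\nabla u),\,\varphi\,\xi_r'\nabla\dist(\cdot,\Gamma)\bigr\rangle\,\dx,
\end{equation*}
which I would bound exactly as in the proof of Theorem \ref{34}: by \eqref{5} and Hölder's inequality (Propositions \ref{31} and \ref{32}), it is controlled by $r^{-1}$ times the $L^{p_1(\cdot)}$-norm of $|\nabla u|^{p_1-1}+|u|^{p_1-1}+1$ on the shell $\{r\le\dist(\cdot,\Gamma)\le 2r\}$, and using \eqref{51} together with the estimate from Step 1 one gets a bound of order $r^{(n-d-p_1^+)/p_1^+}\to 0$ as $r\to 0^+$.

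The main obstacle is the same as in Theorem \ref{34}: obtaining the shell estimate $\int |\nabla u|^{p_1}\,\dx\le C r^{n-d-p_1^+}$ and verifying that the dyadic sum converges, both of which hinge crucially on \eqref{30} ensuring $n-d>p_1^+$ (after using \eqref{51} with exponent $n-d$). Everything else is a verbatim transcription of the two-step argument for Theorem \ref{34}, simplified by the absence of the $b$ and $h$ terms, with Theorem \ref{27} invoked wherever the pointwise decay \eqref{25} was used.
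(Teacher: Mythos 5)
Your proposal is correct and follows the approach the paper intends; the paper itself gives no argument for this theorem beyond the remark that it follows ``similarly as in Lemma \ref{29} and in the Theorem \ref{34}.'' You correctly identify the structural simplifications (all boundary integrals drop out since $b\equiv h\equiv 0$, the $b=h=0$ analogue of Lemma \ref{29} replaces Lemma \ref{29}, Theorem \ref{27} replaces Theorem \ref{21}), correctly use \eqref{23} to transfer the $\max\{u,0\}$ bound to $\max\{-u,0\}$, and correctly note that condition \eqref{30} forces $n-d>p_1^+$ so the dyadic sum in Step~1 and the $r^{(n-d-p_1^+)/p_1^+}$ bound in Step~2 both close.
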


{\footnotesize
\noindent \textbf{Acknowledgments:} I would like to thank Professor Sergio Almaraz for useful suggestions.
}


\begin{flushright}
\today
\end{flushright}

\end{document}